\documentclass[12pt]{article}
\usepackage{booktabs}
\usepackage{caption}
\usepackage{mathrsfs}
\usepackage{amsmath}
\usepackage{amsfonts,amsthm,amssymb,mathrsfs,bbding}
\usepackage{float}
\usepackage{graphicx}
\captionsetup{%
  figurename=Fig.,
  tablename=Tab.
}

\usepackage{enumerate}
\usepackage{tikz}
\usepackage{caption}
\usepackage{rotating}
\allowdisplaybreaks[4]
\usepackage{tabularx}
\usepackage{cite}
\pagestyle{myheadings} \markright{} \textwidth 150mm \textheight 235mm \oddsidemargin=1cm
\evensidemargin=\oddsidemargin\topmargin=-1.5cm

\newtheorem{thm}{Theorem}[section]
\newtheorem{prob}{Problem}

\newtheorem{lem}[thm]{Lemma}


\theoremstyle{definition}
\renewcommand\proofname{\bf Proof}
\addtocounter{section}{0}
\begin{document}

\title{\LARGE{\bf Spectral radius and (globally) rigidity of graphs in $R^2$} \footnote{This work is supported by the National Natural Science Foundation of China (Grant Nos. 11771141, 12011530064 and 11871391)}\setcounter{footnote}{-1}\footnote{\emph{Email addresses:} ddfan0526@163.com (D. Fan), huangxymath@163.com (X. Huang), huiqiulin@126.com (H. Lin)}}
\author{Dandan Fan$^{a,b}$, Xueyi Huang$^a$, Huiqiu Lin$^a$\thanks{Corresponding author.}\\[2mm]
\small\it $^a$ School of Mathematics, East China University of Science and Technology, \\
\small\it   Shanghai 200237, China\\[1mm]
\small\it $^b$ College of Mathematics and Physics, Xinjiang Agricultural University\\
\small\it Urumqi, Xinjiang 830052, China}
\date{}
\maketitle

{\flushleft\large\bf Abstract}
Over the past half century,  the rigidity of graphs in  $R^2$ has aroused a great deal of interest. Lov\'{a}sz and Yemini (1982) proved that every $6$-connected graph is rigid in $R^2$. Jackson and Jord\'{a}n (2005) provided a  similar vertex-connectivity condition for the globally rigidity of graphs in $R^2$. These results imply that a graph $G$ with algebraic connectivity  $\mu(G)>5$ is  (globally) rigid  in $R^2$. Cioab\u{a}, Dewar and Gu (2021) improved this bound, and proved that a graph $G$ with minimum degree $\delta\geq 6$  is  rigid in $R^2$ if $\mu(G)>2+\frac{1}{\delta-1}$, and is globally rigid in $R^2$ if $\mu(G)>2+\frac{2}{\delta-1}$. In this paper, we study the (globally) rigidity of graphs in $R^2$ from the viewpoint of adjacency eigenvalues.  Specifically, we provide sufficient conditions for a  2-connected  (resp. 3-connected) graph with given minimum degree to be rigid (resp. globally rigid) in terms of the spectral radius. Furthermore, we determine the unique graph attaining the maximum spectral radius among all minimally rigid graphs of order $n$.

\begin{flushleft}
\textbf{Keywords:} Rigid graph; globally rigid graph; minimally rigid graph; spectral radius.
\end{flushleft}
\textbf{AMS Classification:} 05C50

\section{Introduction}

Rigidity is the property of a structure that does not flex under an applied force.  It is well studied in discrete geometry and mechanics, and has various applications in  material science, engineering and biological science.  Roughly speaking, a rigid graph is an embedding of a graph in a Euclidean space which is structurally rigid. That is, a graph is rigid if the structure formed by replacing the edges by rigid rods and the vertices by flexible hinges is rigid. 

Formally, a \textit{$d$-dimensional framework} $(G,p)$ in $R^d$ is the combination of a finite graph $G=(V,E)$ and a map $p: V\rightarrow R^{d}$. Two $d$-dimensional frameworks $(G,p)$ and $(G,q)$ are \textit{equivalent} if $\|p(u)-p(v)\|=\|q(u)-q(v)\|$ holds for every $uv\in E$, where $\|\cdot\|$ denotes the Euclidean norm in $R^d$. Two $d$-dimensional frameworks $(G,p)$ and $(G,q)$ are \textit{congruent} if $\|p(u)-p(v)\|=\|q(u)-q(v)\|$ holds for every $u,v\in V$. A $d$-dimensional framework $(G,p)$ is \textit{generic} if the coordinates of its vertices are algebraically independent over the rationals. A graph $G$ is said to be \textit{rigid} in $R^d$ if for every generic $d$-dimensional framework $(G,p)$ there exists an $\varepsilon>0$ such that every $d$-dimensional framework $(G,q)$ equivalent to $(G,p)$ satisfying $\|p(u)-q(u)\|<\varepsilon$ for all $u\in V$ is actually congruent to $(G,p)$. A graph $G$ is called \textit{redundantly rigid} in $R^d$ if $G-e$ is rigid in $R^d$ for every $e\in E(G)$. A $d$-dimensional framework $(G, p)$ is \textit{globally rigid} if every framework that is equivalent to $(G, p)$ is congruent to $(G, p)$.  We say that a graph $G$  is \textit{globally rigid} in $R^d$  if  there exists a globally rigid generic $d$-dimensional framework $(G, p)$ (cf. \cite{Cioaba}).

In 1970, Laman \cite{Leman} provided a combinatorial characterization for rigid graphs in $R^2$. Since then, the rigidity of graphs has arouse a lot of interest. Especially, some sufficient conditions in terms of the vertex-connectivity or edge-connectivity for a graph to be rigid or globally rigid in $R^2$  have been successively discovered.  In 1982, Lov\'{a}sz and Yemini \cite{Yemini} constructed some $5$-connected non-rigid graphs, and proved that every $6$-connected graph is rigid. 
In 1992, Hendrickson \cite{Hendrickson} proved that every globally rigid graph with at least four vertices is $3$-connected and redundantly rigid. Later, Jackson and Jord\'{a}n \cite{Jackson} proved that every $6$-connected graph is globally rigid. Also, they  observed that a simple graph $G$ is (globally) rigid if $G$ is $6$-edge-connected, $G-v$ is $4$-edge-connected for every $v\in V(G)$, and $G-\{u,v\}$ is $2$-edge-connected for every $u,v\in V(G)$ \cite{Jackson-1}. Jackson, Servatius and Servatius \cite{JSS} applied the concept of essential connectivity (see \cite{Lai} for the definition) to the investigation of  (global) rigidity of graphs, and showed that every $4$-connected essentially $6$-connected graph is globally rigid. Recently, Gu, Meng, Rolek, Wang and Yu\cite{Gu-Meng} used discharging arguments to prove that every $3$-connected essentially $9$-connected graph is globally rigid. Naturally, we consider the following problem:
\begin{prob}\label{prob1}
Which spectral condition can guarantee the rigidity of a graph in $R^2$?
\end{prob}

Let $G$ be a graph. Denote by  $\delta(G)$ ($\delta$ for short) the minimum degree of  $G$, $D(G)$ the diagonal matrix of vertex degrees of $G$, and $A(G)$ the adjacency matrix of $G$. Then the matrix $L(G)=D(G)-A(G)$ is called the \textit{Laplacian matrix} of $G$. The second smallest eigenvalue of $L(G)$, denoted by $\mu(G)$, is known as the \textit{algebraic connectivity} of $G$. It is well known that the vertex-connectivity of $G$ is at least $\mu(G)$.
With regard to Problem \ref{prob1}, the results in Lov\'{a}sz and Yemini \cite{Yemini} and Jackson and Jord\'{a}n \cite{Jackson-1} imply that $G$ is (globally) rigid if $\mu(G)>5$. Recently, Cioab\u{a}, Dewar and Gu \cite{Cioaba} improved their bound and utilized the necessary conditions for packing rigid subgraphs to obtain spectral conditions for a graph to be (globally) rigid in terms of the algebraic connectivity. Concretely, they showed that a graph $G$ with $\delta(G)=\delta\geq 6$ is  rigid if $\mu(G)>2+\frac{1}{\delta-1}$, and is globally rigid if $\mu(G)>2+\frac{2}{\delta-1}$. It is interesting to extend the results on finding a condition for a graph $G$ to be (globally) rigid in terms of its spectral radius.

The largest eigenvalue of $A(G)$, denoted by $\rho(G)$, is called the \textit{spectral radius} of $G$. A graph is $k$-connected if removing fewer than $k$ vertices always leaves the remaining graph connected.
Let $B_{n,n_1}^{i}$ be the graph obtained from $K_{n_1}\cup K_{n-n_{1}}$ by adding $i$ independent edges between $K_{n_1}$ and $K_{n-n_{1}}$.  In this paper, we give some sufficient conditions for a graph to be (globally) rigid in $R^{2}$ in terms of the spectral radius.

\begin{thm}\label{thm::1.1}
Let $G$ be a 2-connected graph of the minimum degree $\delta\geq 6$ and order $n\geq 2\delta+4$. If $\rho(G)\geq \rho(B_{n,\delta+1}^{2})$, then $G$ is rigid unless $G\cong B_{n,\delta+1}^{2}$.
\end{thm}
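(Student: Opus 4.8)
The plan is to argue by contradiction. Suppose $G$ is $2$-connected with $\delta(G)=\delta\ge 6$ and $n\ge 2\delta+4$, that $\rho(G)\ge\rho(B_{n,\delta+1}^{2})$, but that $G$ is \emph{not} rigid in $R^{2}$ and $G\not\cong B_{n,\delta+1}^{2}$; the aim is to deduce $\rho(G)<\rho(B_{n,\delta+1}^{2})$. The starting point is the Laman--Lov\'asz--Yemini description of the $2$-dimensional generic rigidity matroid (see \cite{Leman,Yemini}): for $n\ge 2$, the graph $G$ is rigid in $R^{2}$ exactly when its rigidity matroid has rank $2n-3$, and this rank equals $\min\sum_{i}(2|V_{i}|-3)$ over all families $\mathcal V=\{V_{1},\dots,V_{m}\}$ of vertex subsets of size $\ge 2$ whose induced edge sets $E(G[V_{i}])$ together cover $E(G)$. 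Thus non-rigidity gives such a family with $\sum_{i}(2|V_{i}|-3)\le 2n-4$. Picking $\mathcal V$ of minimum total weight, one may assume every $G[V_{i}]$ is connected (splitting a disconnected member decreases the weight) and that any two members share at most one vertex (replacing two members meeting in $\ge 2$ vertices by their union decreases the weight). Hence the edge sets $E(G[V_{i}])$ are pairwise disjoint and partition $E(G)$, and $\bigcup_{i}V_{i}=V(G)$ because $\delta\ge 6$ forces every vertex to lie on an edge.

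The combinatorial core is to pin down $\mathcal V$ using $\delta\ge 6$ and $2$-connectivity. Call a member \emph{large} if it has $\ge 3$ vertices, and let $b$ be the number of large members. A degree count rules out $b\le 1$: if there is a single large set, every vertex outside it carries all of its $\ge 6$ incident edges on size-$2$ members, which pushes the weight above $2n-4$. If $b=2$, say with large members $V_{1},V_{2}$, then no size-$2$ member can lie inside a single $V_{i}$ (its edge would be double-covered), so every size-$2$ member joins $V_{1}$ to $V_{2}$; the weight bound reads $2|V_{1}|+2|V_{2}|-6+t\le 2n-4$ with $t$ the number of size-$2$ members, and $2$-connectivity rules out a shared vertex of $V_{1},V_{2}$ as well as the cases $t\le 1$, leaving $V_{1}\cap V_{2}=\emptyset$, $|V_{1}|+|V_{2}|=n$, $t=2$, and the two crossing edges forming a matching. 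Since in each $V_{i}$ there is a vertex all of whose neighbours lie in $V_{i}$, we get $|V_{i}|\ge\delta+1$, so $G$ is a spanning subgraph of $B_{n,n_{1}}^{2}$ with $\delta+1\le n_{1}:=\min\{|V_{1}|,|V_{2}|\}\le\lfloor n/2\rfloor$. If $b\ge 3$, the same restrictions (pairwise intersection $\le 1$, the weight bound, and $\delta\ge 6$) force $G$ into a much sparser configuration — an edge-disjoint union of several cliques glued at single vertices, plus a bounded number of extra edges — whose densest part is strictly smaller than $K_{n-\delta-1}$.

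For the spectral conclusion, take first the case $b=2$. Since $B_{n,n_{1}}^{2}$ is connected, $\rho(G)\le\rho(B_{n,n_{1}}^{2})$, with equality only if $G=B_{n,n_{1}}^{2}$. It then suffices to show that $\rho(B_{n,n_{1}}^{2})$ is strictly decreasing in $n_{1}$ on $[\delta+1,\lfloor n/2\rfloor]$. I would obtain this either from a Kelmans-type vertex shift comparing $B_{n,n_{1}}^{2}$ with $B_{n,n_{1}+1}^{2}$, or by analysing the $4\times4$ quotient matrix of the equitable partition of $B_{n,n_{1}}^{2}$ whose classes are the two vertices of $K_{n_{1}}$ incident to crossing edges, the remaining $n_{1}-2$ vertices of $K_{n_{1}}$, the two vertices of $K_{n-n_{1}}$ incident to crossing edges, and the remaining $n-n_{1}-2$ vertices of $K_{n-n_{1}}$; one checks that the largest root of its characteristic polynomial decreases as $n_{1}$ grows. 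Then $\rho(B_{n,\delta+1}^{2})\le\rho(G)\le\rho(B_{n,n_{1}}^{2})\le\rho(B_{n,\delta+1}^{2})$ forces equality throughout and hence $G\cong B_{n,\delta+1}^{2}$, a contradiction. In the case $b\ge 3$, the structural bound together with $\rho(B_{n,\delta+1}^{2})>\rho(K_{n-\delta-1})=n-\delta-2$ (as $K_{n-\delta-1}$ is a proper subgraph of the connected graph $B_{n,\delta+1}^{2}$) and the hypothesis $n\ge 2\delta+4$ yields $\rho(G)<n-\delta-2<\rho(B_{n,\delta+1}^{2})$, again a contradiction. Finally one records that $B_{n,\delta+1}^{2}$ is itself $2$-connected with minimum degree $\delta$ and is not rigid — being two rigid bodies $K_{\delta+1}$ and $K_{n-\delta-1}$ joined by two bars, it retains one internal degree of freedom — which is exactly why it appears as the exceptional graph.

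I expect the $b\ge 3$ analysis, and turning it into a spectral bound, to be the main obstacle: one has to argue, using only $2$-connectivity and $\delta\ge 6$, that a non-rigid $G$ cannot have an edge-disjoint clique decomposition with a piece almost as large as $K_{n-\delta-1}$, and then estimate precisely how much attaching the remaining (small, but possibly numerous) cliques and edges can raise the spectral radius of the dominant clique — this is where the hypotheses $\delta\ge 6$ and $n\ge 2\delta+4$ must be used quantitatively. The monotonicity of $\rho(B_{n,n_{1}}^{2})$ in $n_{1}$ is a second, more routine, point that nonetheless must be carried out carefully enough to guarantee uniqueness of the extremal graph.
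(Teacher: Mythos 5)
Your starting point is genuinely different from the paper's: you invoke the Lov\'asz--Yemini rank formula for the $2$-dimensional rigidity matroid and analyse a minimum-weight cover $\{V_1,\dots,V_m\}$ with $\sum_i(2|V_i|-3)\le 2n-4$, whereas the paper applies Gu's packing criterion (its Lemma 2.1, with $k=1$) to get a vertex set $Z$ and a partition $\pi$ of $V(G-Z)$ violating the counting condition, and then combines edge counting (its Lemma 2.7) with the Hong--Shu--Fang/Nikiforov bound (its Lemma 2.5) and the monotonicity of $\rho(B^2_{n,a})$ in $a$ (its Lemma 2.4). Your treatment of the case of exactly two large members is essentially sound and parallels the paper's main case ($|Z|=0$, two nontrivial parts): there the weight/counting bound plus $2$-connectivity forces two independent crossing edges, Lemma 2.2-type reasoning gives $\min\{|V_1|,|V_2|\}\ge\delta+1$, and monotonicity of $\rho(B^2_{n,n_1})$ finishes. (One small omission there: you silently assume $V_1\cup V_2=V(G)$; a vertex lying only in size-$2$ members must be excluded by a short degree count using $\delta\ge 6$.)

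The genuine gap is the case $b\ge 3$, which you acknowledge but do not actually handle, and your sketch of it would fail as stated. First, the members of the cover need not induce cliques, so ``an edge-disjoint union of several cliques glued at single vertices'' is not what the cover gives you; at best $e(G[V_i])\le\binom{|V_i|}{2}$. Second, the claim that the densest part is ``strictly smaller than $K_{n-\delta-1}$'' does not follow from the weight bound: with $b=3$ the inequality $\sum_i(2|V_i|-3)\le 2n-4$ still permits a member of size about $n-4$, which exceeds $n-\delta-1$ once $\delta\ge 6$. Third, even granting a bound on the largest part, the inference $\rho(G)<n-\delta-2$ is not valid --- the spectral radius is not controlled by the largest clique or largest member alone; one needs a global edge bound. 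This is exactly where the paper does its real work: in the analogous cases it bounds $m$ from above using the counting inequality together with the convexity estimate of Lemma 2.7 (the extremal distribution of part sizes being $(\delta+1,2,\dots)$ or $(\delta-1,2,2,\dots)$ depending on $|Z|$), and contradicts the lower bound $m>\frac{n^2}{2}-\frac{(2\delta+3)n}{2}+(\delta+1)^2$ that follows from $\rho(G)>n-\delta-2$ via Lemma 2.5/2.6; the hypotheses $\delta\ge6$ and $n\ge 2\delta+4$ enter quantitatively there. Without an argument of this type (bounding $\sum_i\binom{|V_i|}{2}$ plus the crossing edges, and converting it into a spectral contradiction), your proof is incomplete precisely at the step you yourself flag as the main obstacle.
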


Hendrickson \cite{Hendrickson} proved that every globally rigid graph in $R^d$ with at least $d+2$ vertices is $(d+1)$-connected and redundantly rigid. Thus $3$-connected is a necessary condition for a graph to be globally rigid in $R^2$.

\begin{thm}\label{thm::1.2}
Let $G$ be a 3-connected graph of the minimum degree $\delta\geq 6$ and order $n\geq 2\delta+4$. If $\rho(G)\geq \rho(B_{n,\delta+1}^{3})$, then $G$ is globally rigid unless $G\cong B_{n,\delta+1}^{3}$.
\end{thm}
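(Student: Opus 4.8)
The plan is to combine the combinatorial characterisation of global rigidity in $R^2$ with the spectral machinery already developed for Theorem~\ref{thm::1.1}. Recall (Hendrickson \cite{Hendrickson}, Jackson and Jord\'{a}n \cite{Jackson}) that a graph on at least four vertices is globally rigid in $R^2$ if and only if it is $3$-connected and redundantly rigid. Since $G$ is $3$-connected with $n\geq 2\delta+4\geq 16$, it therefore suffices to show that if $\rho(G)\geq\rho(B_{n,\delta+1}^{3})$ and $G\not\cong B_{n,\delta+1}^{3}$, then $G$ is redundantly rigid; so assume for contradiction that it is not. I would first argue that $G$ is rigid: if not, the structural part of the proof of Theorem~\ref{thm::1.1} (which uses only $2$-connectivity, $\delta\geq 6$ and $n\geq 2\delta+4$) produces a partition $V(G)=V_1\cup V_2$ with $|V_i|\geq\delta+1$ and at most two edges between $V_1$ and $V_2$, contradicting the fact that a $3$-connected graph has at least three edges across every partition. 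Hence $G$ is rigid, and being $3$-connected, not globally rigid, it fails to be redundantly rigid, so there is an edge $e_0=uv\in E(G)$ with $H:=G-e_0$ not rigid.

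Next I would extract a sparse partition of $V(G)$. The graph $H$ is $2$-connected with $\delta(H)\geq\delta-1\geq 5$, and all but the two vertices $u,v$ still have degree at least $\delta$. Running the rigid-component (equivalently, matroid-separator) analysis of Theorem~\ref{thm::1.1} on $H$, where the two exceptional vertices $u,v$ perturb only $O(1)$ of the estimates, I would conclude that $V(G)=V_1\cup V_2$ for disjoint nonempty sets with $e_H(V_1,V_2)\leq 2$ and with $u,v$ on opposite sides, so $e_0$ crosses; then $e_G(V_1,V_2)\leq 3$, while $3$-connectivity gives $e_G(V_1,V_2)\geq 3$, hence $e_G(V_1,V_2)=3$. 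The degree bound $\delta|V_i|\leq\sum_{w\in V_i}d_G(w)\leq|V_i|(|V_i|-1)+3$ forces $|V_i|\geq\delta+1$. Moreover the three crossing edges form a matching: if two of them met at a vertex $w$ of $V_1$ (say), then deleting $w$ together with the third crossing edge's endpoint in $V_1$ would disconnect the nonempty set $V_1\setminus\{\cdot,\cdot\}$ (of size $\geq\delta-1\geq 5$) from $V_2$, contradicting $3$-connectivity; the symmetric argument handles $V_2$. Completing $G[V_1]$ and $G[V_2]$ to cliques shows that $G$ is a spanning subgraph of $B_{n,a}^{3}$ for some $a$ with $\delta+1\leq a\leq n-\delta-1$.

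Now I would finish with the spectral comparison. Since $G$ is connected, $\rho(G)\leq\rho(B_{n,a}^{3})$ with equality only if $G\cong B_{n,a}^{3}$; and by the clique-balancing lemma from the proof of Theorem~\ref{thm::1.1} (shifting a vertex from the smaller to the larger clique strictly increases the spectral radius while both cliques keep at least $\delta+1$ vertices) one has $\rho(B_{n,a}^{3})\leq\rho(B_{n,\delta+1}^{3})$, with equality only for $a\in\{\delta+1,\,n-\delta-1\}$. Together with the hypothesis $\rho(G)\geq\rho(B_{n,\delta+1}^{3})$ this forces equality throughout, so $G\cong B_{n,a}^{3}$ with $a\in\{\delta+1,\,n-\delta-1\}$, and since $B_{n,n-\delta-1}^{3}\cong B_{n,\delta+1}^{3}$ we get $G\cong B_{n,\delta+1}^{3}$, a contradiction. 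Finally, the exceptional graph cannot be removed: $B_{n,\delta+1}^{3}$ is $3$-connected with minimum degree $\delta$ and order $n$, yet deleting any one of its three crossing edges yields $B_{n,\delta+1}^{2}$, which is not rigid, so $B_{n,\delta+1}^{3}$ is not redundantly rigid and hence not globally rigid.

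The step I expect to be the main obstacle is the middle one: transferring the rigid-component analysis of Theorem~\ref{thm::1.1} to $H=G-e_0$, whose minimum degree may drop to $\delta-1$, and then using $3$-connectivity of $G$ to compensate for this loss and to pin the partition down to exactly three independent crossing edges. In particular one must rule out the degenerate separations---one side consisting only of the ``hinge'' vertices, or two (or all three) crossing edges sharing an endpoint---and verify that the constraint from the rigidity count leaves no slack for a partition with more than three crossing edges once $\delta\geq 6$ and $n\geq 2\delta+4$.
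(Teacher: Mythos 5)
Your proposal is correct and follows essentially the same route as the paper: reduce global rigidity to redundant rigidity via Lemma \ref{lem::3.1}, apply the partition analysis of Theorem \ref{thm::1.1} (based on Lemma \ref{lem::2.1}) to $G$ minus the critical edge, conclude in the surviving case that $G$ is a spanning subgraph of some $B_{n,a}^{3}$ with $a\geq\delta+1$, and finish with the monotonicity in Lemma \ref{lem::2.4}. The middle step you flag as the main obstacle is resolved in the paper without ever lowering the minimum degree: all degree estimates are kept in $G$, and one simply notes that removing the edge $f$ worsens the right-hand side of the Lemma \ref{lem::2.1} inequality by at most $1$ (and only when $f$ crosses the partition), a loss that the slack in the Theorem \ref{thm::1.1} case analysis easily absorbs.
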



Note that a graph is rigid if and only if it has a minimally rigid spanning subgraph.
The graph $G$ is said to be \emph{minimally rigid} if $G$ is rigid, and $G-e$ is not rigid for every $e\in E(G)$. In 1970, Leman \cite{Leman} rediscovered the minimally rigid graphs in $R^2$ by using the edge count property. He found that a graph $G$ with $n$ vertices and $m$ edges is a minimally rigid  if and only if $m=2n-3$ and
$e_{G}(X)\leq 2|X|-3$ for all $X\subseteq V(G)$ with $|X|\geq 2$, where $e_{G}(X)$ is the number of edges of the subgraph $G[X]$ induced by $X$ in $G$. Minimally rigid graphs are also called Leman graphs in $R^2$. Based on the structural property of minimally rigid graphs, we determine the unique graph with the maximum spectral radius among all connected minimally rigid graphs of order $n$ in $R^2$ .

\begin{thm}\label{thm::1.3}
Let $G$ be a connected minimally rigid graph of order $n\geq 3$.
Then $\rho(G)\leq \rho(K_2\nabla (n-2)K_{1})$, with equality if and only if $G\cong K_2\nabla (n-2)K_{1}$.
\end{thm}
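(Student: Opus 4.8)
We may assume $n\ge 4$, since the only connected graph on $3$ vertices with $2\cdot 3-3=3$ edges is $K_3=K_2\nabla K_1$. By Leman's edge-count characterisation recalled above, $G$ has $m=2n-3$ edges and $e_G(X)\le 2|X|-3$ for all $X\subseteq V(G)$ with $|X|\ge 2$; applying this to $X=V(G)\setminus\{v\}$ gives $\delta(G)\ge 2$. Let $x$ be the Perron eigenvector of $A(G)$, normalised so that $x_u=\max_v x_v=1$, and set $d=d(u)$. Since $d+e_G(N(u))=e_G(N[u])\le 2(d+1)-3$, the subgraph $G[N(u)]$ has at most $d-1$ edges and is therefore a forest.

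The core of the argument is the inequality $\rho^2-\rho\le 2n-4$, where $\rho=\rho(G)$. Using $x_u=1$ and the eigenvalue equation, $\rho^2=\sum_{v\in N(u)}\rho x_v=\sum_{v\in N(u)}\sum_{w\in N(v)}x_w$; splitting every $N(v)$ into $\{u\}$, $N(v)\cap N(u)$ and $N(v)\setminus N[u]$ yields
\[
\rho^2=d+\sum_{v\in N(u)}t_v x_v+\sum_{w\notin N[u]}\bigl|N(u)\cap N(w)\bigr|\,x_w,\qquad t_v:=\deg_{G[N(u)]}(v).
\]
Subtracting $\rho=\sum_{v\in N(u)}x_v$ and using $0\le x_v\le1$, I would estimate the contribution of each vertex of $N(u)$ that is isolated in $G[N(u)]$ by $1$ and of each non-isolated one by $t_v$; as a forest with $e_1:=e_G(N(u))$ edges and $c$ non-trivial components has exactly $e_1+c$ non-isolated vertices, this gives $\sum_{v\in N(u)}\bigl(1+(t_v-1)x_v\bigr)\le d+e_1-c$. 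Bounding the last sum above by $e(N(u),W)$ with $W:=V(G)\setminus N[u]$, and using $d+e_1+e(N(u),W)+e_G(W)=2n-3$, one reaches
\[
\rho^2-\rho\le 2n-3-e_G(W)-c.
\]
If $e_1\ge1$ then $c\ge1$, so $\rho^2-\rho\le 2n-4$. If $e_1=0$, then every $t_v=0$, the first sum equals $d-\rho$ exactly, hence $\rho^2\le d+e(N(u),W)\le 2n-3$; since $G$ is connected on $n\ge3$ vertices we have $\rho\ge\sqrt2>1$, so $\rho^2-\rho<2n-4$. A quotient-matrix computation for $K_2\nabla(n-2)K_1$ shows that its spectral radius is the larger root of $\lambda^2-\lambda-2(n-2)=0$; as $\lambda\mapsto\lambda^2-\lambda$ is increasing for $\lambda\ge1$, the bound $\rho^2-\rho\le 2n-4$ forces $\rho(G)\le\rho(K_2\nabla(n-2)K_1)$, with equality only when $\rho^2-\rho=2n-4$.

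To settle the equality case I would trace every inequality back. Equality requires $e_1\ge1$, $e_G(W)=0$, $c=1$, no vertex of $N(u)$ isolated in $G[N(u)]$ (here one uses $x_v>0$), $x_v=1$ whenever $t_v\ge2$, and $x_w=1$ for every $w\in W$ (each such $w$ has a neighbour in $N(u)$ because $G$ is connected and $e_G(W)=0$). Hence $T:=G[N(u)]$ is a spanning tree of $N(u)$ with $d-1$ edges; plugging $e_1=d-1$, $e_G(W)=0$ and $|W|=n-1-d$ into $d+e_1+e(N(u),W)+e_G(W)=2n-3$ gives $e(N(u),W)=2|W|$, and since $\delta(G)\ge2$ and $W$ sends all its edges into $N(u)$, every $w\in W$ has degree exactly $2$; then $\rho=\rho x_w=\sum_{z\in N(w)}x_z\le 2$, contradicting $\rho>2$ (which holds because $\rho^2-\rho=2n-4\ge4$). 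Thus $W=\emptyset$, $d=n-1$, and $G=K_1\nabla T$ for a tree $T$ on $n-1\ge3$ vertices; each leaf $\ell$ of $T$ then has $N_G(\ell)=\{u,\pi(\ell)\}$, where $\pi(\ell)$ (the unique $T$-neighbour of $\ell$) is a non-leaf, so $x_\ell=2/\rho$, while $u$ and all non-leaves of $T$ carry Perron entry $1$.

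The final step is to force $T$ to be a star. The eigenvalue equation at a non-leaf $v$ of $T$ with $p_v$ leaf-neighbours and $q_v$ non-leaf-neighbours in $T$ reads $\rho=1+\tfrac{2p_v}{\rho}+q_v$, i.e.\ $q_v\rho+2p_v=\rho^2-\rho=2n-4$. Summing over the $k$ non-leaves of $T$, and using $\sum_v p_v=(n-1)-k$ (the number of leaves of $T$) and $\sum_v q_v=2(k-1)$ (twice the number of edges of the subtree induced on the non-leaves), one gets $(k-1)\bigl(\rho-(n-1)\bigr)=0$. Since $\rho=n-1$ would give $(n-1)(n-2)=2(n-2)$, i.e.\ $n=3$, we conclude $k=1$, so $T=K_{1,n-2}$ and $G=K_1\nabla K_{1,n-2}=K_2\nabla(n-2)K_1$. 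I expect the equality analysis — and in particular this last collapse of $T$ to a star — to be the main obstacle, since it is there that the exact count $m=2n-3$ together with the hereditary sparsity condition must be used in full; the inequality $\rho^2-\rho\le 2n-4$ itself is comparatively routine.
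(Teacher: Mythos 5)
Your proposal is correct in substance but takes a genuinely different route from the paper. The paper's proof is very short: Laman's characterization gives $m=2n-3$ and (applied to $V(G)\setminus\{v\}$) $\delta\geq 2$, and then the edge--minimum-degree bound of Lemma \ref{lem::2.5}, which is decreasing in $\delta$ by Lemma \ref{lem::2.6}, immediately yields $\rho(G)\leq \frac{1}{2}+\sqrt{2m-2n+\frac{9}{4}}=\frac{1+\sqrt{8n-15}}{2}=\rho(K_2\nabla(n-2)K_1)$; the equality case is read off from the equality characterization in Lemma \ref{lem::2.5} (bidegreed with degrees $2$ and $n-1$) plus a two-line degree count. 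Your inequality $\rho^2-\rho\leq 2n-4$ is exactly equivalent to that bound, but you reprove it from scratch by a local Perron-vector computation at a maximum-entry vertex $u$, using the hereditary Laman condition through the structure of $G[N(u)]$, and you settle uniqueness by tracing equality through every estimate and then collapsing the cone over a tree to the cone over a star via the eigen-equations at the internal vertices. What your route buys is self-containedness (no Hong--Nikiforov bound and, more importantly, no appeal to its equality characterization); the cost is a considerably longer equality analysis, which you nevertheless carry out correctly, including the final identity $(k-1)(\rho-(n-1))=0$.

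One justification needs repair: you assert that $G[N(u)]$ "has at most $d-1$ edges and is therefore a forest." That inference is invalid (a triangle together with isolated vertices has at most $d-1$ edges once $d\geq 4$ but is not a forest), and the forest property cannot be dropped, since your count "number of non-isolated vertices $=e_1+c$" uses it. The claim itself is true and is exactly where the hereditary sparsity must be invoked: if $C$ were a cycle in $G[N(u)]$, then $X=V(C)\cup\{u\}$ satisfies $e_G(X)\geq 2|V(C)|=2|X|-2>2|X|-3$, contradicting $e_G(X)\leq 2|X|-3$. With this one-line fix the argument goes through as written.
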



\section{Proof of Theorem \ref{thm::1.1}}

For any partition $\pi$ of $V(G)$, $e_{G}(\pi)$ denotes the number of edges of $G$ whose ends lie in different parts of $\pi$. A part is trivial if it contains of a single vertex. Let $Z\subset V(G)$ and $\pi$ be a partition of $V(G-Z)$ with $n_{0}$ trivial parts $v_1,v_2,\ldots, v_{n_0}$. Denote by $n_{Z}(\pi)=\sum_{1\leq i\leq n_0}|Z_i|$, where $Z_i$ is the set of vertices in $Z$ that are adjacent to $v_i$ for $1\leq i\leq n_0$. The following structural lemma will play an essential role in the proof of Theorem \ref{thm::1.1}.

\begin{lem}(See \cite{Gu-1})\label{lem::2.1}
A graph $G$ contains $k$ edge-disjoint spanning rigid subgraphs if for every $Z\subset V(G)$ and every partition $\pi$ of $V(G-Z)$ with $n_{0}$ trivial parts and $n_{0}'$ nontrivial parts,
$$e_{G-Z}(\pi)\geq k(3-|Z|)n_{0}'+2kn_0-3k-n_{Z}(\pi).$$
\end{lem}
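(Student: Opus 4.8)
\medskip
\noindent\textbf{Proof strategy.}

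Write $n=|V(G)|$. The natural setting is the generic $2$-dimensional rigidity matroid $\mathcal{R}_2=\mathcal{R}_2(G)$ on the ground set $E(G)$: its independent sets are the edge sets obeying Laman's sparsity bound $e(X)\le 2|X|-3$, and its rank function satisfies $r(F)=\sum_{C}\bigl(2|V(C)|-3\bigr)$, the sum over the maximal rigid subgraphs (the rigid components) of $(V(F),F)$; in particular $r(E(G))=2n-3$ exactly when $G$ is rigid, and then every base of $\mathcal{R}_2$ has $2n-3$ edges and spans $V(G)$, i.e. is the edge set of a minimally rigid spanning subgraph. Hence ``$G$ has $k$ edge-disjoint spanning rigid subgraphs'' is equivalent to ``$\mathcal{R}_2(G)$ has $k$ pairwise disjoint bases'', and by the matroid union theorem of Nash--Williams the latter is equivalent to
\[
|E(G)|-|F|\ \ge\ k\bigl(2n-3-r(F)\bigr)\qquad\text{for all } F\subseteq E(G).
\]
So the entire task is to deduce this family of inequalities from the partition hypothesis of the lemma. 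Note that putting $F=E(G)$ in the display forces $r(E(G))\ge 2n-3$, hence $G$ is automatically rigid once the display is established.

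The plan for the deduction is to manufacture, from an arbitrary $F\subseteq E(G)$, a suitable pair $(Z,\pi)$ to feed into the hypothesis. Take the rigid components $C_1,\dots,C_p$ of $(V(F),F)$. Two distinct rigid components meet in at most one vertex (otherwise their union would be rigid, contradicting maximality), so the vertices lying in two or more of the $C_i$ form a well-defined set; I would take $Z$ to consist essentially of these shared vertices (plus, if necessary, the vertices of the few ``all-shared'' components), and $\pi$ to be the partition of $V(G)\setminus Z$ whose nontrivial parts are the nonsingleton sets $V(C_i)\setminus Z$ and whose trivial parts are the remaining vertices. Then one checks the identity $|E(G)|-|F|=e_{G-Z}(\pi)+\bigl(\,|\{e\in E(G):e\cap Z\ne\emptyset\}|-|\{e\in F:e\cap Z\ne\emptyset\}|\,\bigr)$, while $n=|Z|+n_0+\sum_i|V(C_i)\setminus Z|$ and $r(F)=\sum_i(2|V(C_i)|-3)$ let one rewrite $k(2n-3-r(F))$ as $k(3-|Z|)n_0'+2kn_0-3k$ plus an error term bounded by $n_Z(\pi)$ together with the edges of $G$ meeting $Z$. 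Collecting these, the target inequality for $F$ becomes precisely the hypothesis inequality for $(Z,\pi)$, which finishes the proof.

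I expect this last step, the combinatorial bookkeeping, to be the real obstacle. Rigid components genuinely overlap at ``rigidity cut vertices'', a component may consist entirely of shared vertices (so that $V(C_i)\setminus Z=\emptyset$ and cannot serve as a part of $\pi$), a set $V(C_i)\setminus Z$ may collapse to a singleton and thus switch from nontrivial to trivial, and the edges of $G$ incident with $Z$ must be charged against $n_Z(\pi)$ with no double counting. Making the correspondence $F\leftrightarrow(Z,\pi)$ exact, rather than merely up to slack, is the crux; it likely requires first choosing the optimal family of rigid pieces so that their vertex sets overlap in a laminar way, or an inductive argument that peels off degenerate components one at a time.
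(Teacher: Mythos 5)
First, a point of reference: the paper does not prove this lemma at all --- it is quoted verbatim from Gu \cite{Gu-1} --- so there is no in-paper argument to compare against; your proposal has to stand on its own as a reconstruction of Gu's theorem. Your framework is the right one and is essentially the standard route: pass to the generic $2$-dimensional rigidity matroid, use the Nash--Williams/Edmonds matroid union criterion $|E(G)|-|F|\geq k\bigl(2n-3-r(F)\bigr)$ for all $F\subseteq E(G)$, and exploit the rank formula $r(F)=\sum_i\bigl(2|V(C_i)|-3\bigr)$ over rigid components together with the fact that two rigid components share at most one vertex. Up to this point the proposal is sound (modulo the slip that your displayed ``identity'' for $|E(G)|-|F|$ omits the $E\setminus F$ edges lying inside nontrivial parts; only an inequality holds, which fortunately is the direction you need).

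The genuine gap is the step you yourself flag as ``combinatorial bookkeeping'': deriving the union inequality for an arbitrary $F$ from the partition hypothesis is the entire content of Gu's theorem, and it does not follow by routine accounting from your choice of $(Z,\pi)$. Concretely, if you chain $|E(G)|-|F|\geq e_{G-Z}(\pi)\geq k(3-|Z|)n_0'+2kn_0-3k-n_Z(\pi)$ you still owe the comparison with $k\bigl(2n-3-r(F)\bigr)$, and the term $n_Z(\pi)$ cannot simply be discarded: it can be arbitrarily large, and it must be offset either by edges incident with $Z$ that avoid $F$ or by the contribution to $r(F)$ of components meeting $Z$. Neither is automatic --- for instance, a rigid component all but one of whose vertices lie in $Z$ turns its leftover vertex into a trivial part all of whose $Z$-adjacencies are $F$-edges, so there is nothing in $E\setminus F$ to charge them to, and the compensation has to come from a careful interaction between $|Z|$, the components inside or straddling $Z$, and the $-3k$ and $(3-|Z|)$ coefficients. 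The degenerate cases you list (components contained in $Z$, sets $V(C_i)\setminus Z$ collapsing to singletons, double counting at shared vertices) are exactly where a naive count breaks, and resolving them is the theorem, not an afterthought; Gu's argument handles this by working with an optimal cover in the Lov\'asz--Yemini rank formula (equivalently, a carefully chosen $Z$ and family of pieces) rather than the raw rigid components of an arbitrary $F$. As it stands, then, the proposal is a plausible plan with the decisive step missing, so I cannot count it as a proof.
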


For $X,Y\subset V(G)$, we denote by $E_{G}(X,Y)$ the set of edges with one endpoint in $U$ and one endpoint in $V$, and $e_{G}(X,Y)$ the number of edges in $E_{G}(X,Y)$. In particular, denote by $\partial_{G}(S)=E_{G}(S,V(G)-S)$.

\begin{lem}(See \cite{Gu-Lai})\label{lem::2.2}
Let $G$ be a graph with the minimum degree $\delta$ and $U$ be a non-empty proper subset of $V(G)$. If $|\partial_{G}(U)|\leq \delta-1$, then $|U|\geq \delta+1$.
\end{lem}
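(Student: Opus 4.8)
The plan is to prove the contrapositive by a short degree-counting argument. Assume $|U| \le \delta$ and write $u = |U|$; the goal is then to show $|\partial_G(U)| \ge \delta$, which immediately forces $|U| \ge \delta + 1$ whenever the hypothesis $|\partial_G(U)| \le \delta - 1$ holds. The idea is that a small set $U$ whose vertices all have high degree cannot hide those edges internally, so many of them must cross the boundary.

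The first step is to record the degree-sum identity on $U$. Each edge of the induced subgraph $G[U]$ contributes $2$ to $\sum_{v\in U}\deg_G(v)$, while each boundary edge of $\partial_G(U)$ contributes exactly $1$, so that
$$\sum_{v\in U}\deg_G(v) = 2e_G(U) + |\partial_G(U)|.$$
Since every vertex has degree at least $\delta$, the left-hand side is at least $\delta u$; and since $G[U]$ has at most $\binom{u}{2}$ edges, we have $2e_G(U) \le u(u-1)$. Combining these two bounds yields
$$|\partial_G(U)| = \sum_{v\in U}\deg_G(v) - 2e_G(U) \ge \delta u - u(u-1) = u(\delta + 1 - u).$$

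The second, and only substantive, step is to analyze the quantity $f(u) = u(\delta + 1 - u)$ over the admissible integer range $1 \le u \le \delta$. This is a downward-opening parabola in $u$ with roots at $0$ and $\delta + 1$, so on the interval $[1,\delta]$ its minimum is attained at the endpoints, where $f(1) = f(\delta) = \delta$. Hence $|\partial_G(U)| \ge f(u) \ge \delta$ for every $u$ with $1 \le u \le \delta$. This contradicts $|\partial_G(U)| \le \delta - 1$, and therefore $|U| \ge \delta + 1$, as claimed.

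I expect no real obstacle here, since the whole argument reduces to two elementary estimates. The one point that warrants care is the direction of the final optimization: one must check that the derived lower bound $u(\delta + 1 - u)$ is \emph{minimized}, not maximized, over the relevant range, and that this minimum occurs at the endpoints $u = 1$ and $u = \delta$ rather than in the interior. The interior values of $u$ only increase the bound on $|\partial_G(U)|$ and so present no difficulty; it is precisely the extreme cases $u=1$ (a single high-degree vertex) and $u=\delta$ that are tight and pin down the threshold $\delta$.
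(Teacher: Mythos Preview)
Your argument is correct: the degree-sum identity $\sum_{v\in U}\deg_G(v)=2e_G(U)+|\partial_G(U)|$ together with $e_G(U)\le\binom{u}{2}$ gives $|\partial_G(U)|\ge u(\delta+1-u)$, and the minimum of this over $1\le u\le\delta$ is indeed $\delta$, attained at the endpoints. The paper does not actually supply a proof of this lemma---it is quoted from \cite{Gu-Lai}---so there is no in-paper argument to compare against; your short degree-counting proof is the standard one and is exactly what one would expect.
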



Let $M$ be a real $n\times n$ matrix, and let $X=\{1,2,\ldots,n\}$. Given a partition $\Pi=\{X_1,X_2,\ldots,X_k\}$ with $X=X_{1}\cup X_{2}\cup \cdots \cup X_{k}$, the matrix $M$ can be partitioned as
$$
M=\left(\begin{array}{ccccccc}
M_{1,1}&M_{1,2}&\cdots &M_{1,k}\\
M_{2,1}&M_{2,2}&\cdots &M_{2,k}\\
\vdots& \vdots& \ddots& \vdots\\
M_{k,1}&M_{k,2}&\cdots &M_{k,k}\\
\end{array}\right).
$$
The \textit{quotient matrix} of $M$ with respect to $\Pi$ is defined as the $k\times k$ matrix $B_\Pi=(b_{i,j})_{i,j=1}^k$ where $b_{i,j}$ is the  average value of all row sums of $M_{i,j}$.
The partition $\Pi$ is called \textit{equitable} if each block $M_{i,j}$ of $M$ has constant row sum $b_{i,j}$.
Also, we say that the quotient matrix $B_\Pi$ is \textit{equitable} if $\Pi$ is an equitable partition of $M$.

\begin{lem}(Brouwer and Haemers \cite{BH}; Godsil and Royle\cite{C.Godsil})\label{lem::2.3}
Let $M$ be a real symmetric matrix, and let $\lambda_{1}(M)$ be the largest eigenvalue of $M$. If $B_\Pi$ is an equitable quotient matrix of $M$, then the eigenvalues of  $B_\Pi$ are also eigenvalues of $M$. Furthermore, if $M$ is nonnegative and irreducible, then $\lambda_{1}(M) = \lambda_{1}(B_\Pi).$
\end{lem}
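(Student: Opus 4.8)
\section{Proof of Lemma \ref{lem::2.3}}

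The plan is to realize the quotient matrix $B_\Pi$ as the action of $M$ on an invariant subspace determined by the partition, and then read off both assertions from this identification. Let $S$ be the $n\times k$ \emph{characteristic matrix} of $\Pi=\{X_1,\ldots,X_k\}$, whose $\ell$-th column is the $0/1$ indicator vector of the part $X_\ell$. First I would record the algebraic reformulation of equitability: the block $M_{i,j}$ having constant row sum $b_{i,j}$ is precisely the statement that, for every index $v\in X_i$, the sum of the entries of row $v$ over the columns indexed by $X_j$ equals $b_{i,j}$. Writing this out for all $v$ and all $j$ condenses into the single matrix identity $MS=SB_\Pi$. Since the columns of $S$ are indicator vectors of disjoint nonempty sets, they are linearly independent, so $S$ has full column rank $k$; in particular $Su\neq 0$ whenever $u\neq 0$, and $u\mapsto Su$ is an isomorphism of $\mathbb{R}^k$ onto $\mathcal U:=\operatorname{col}(S)$.

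For the first assertion, let $(\mu,u)$ be an eigenpair of $B_\Pi$. Multiplying $B_\Pi u=\mu u$ on the left by $S$ and using $MS=SB_\Pi$ yields $M(Su)=S(B_\Pi u)=\mu\,(Su)$. Because $Su\neq 0$, the vector $Su$ is an eigenvector of $M$ with eigenvalue $\mu$. Hence every eigenvalue of $B_\Pi$ is an eigenvalue of $M$; indeed the intertwining map identifies the eigenvalues of $B_\Pi$ with the eigenvalues of the restriction of $M$ to $\mathcal U$. In particular $\lambda_{1}(B_\Pi)\leq\lambda_{1}(M)$.

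For the second assertion I would exploit symmetry together with Perron--Frobenius. The identity $MS=SB_\Pi$ shows that $\mathcal U$ is $M$-invariant, and since $M$ is symmetric its orthogonal complement $\mathcal U^{\perp}$ is $M$-invariant as well. Because $M$ is nonnegative and irreducible, Perron--Frobenius guarantees that $\lambda_{1}(M)$ is a simple eigenvalue with a strictly positive eigenvector $x$. Decomposing $x=x_{\mathcal U}+x_{\perp}$ along the invariant splitting $\mathcal U\oplus\mathcal U^{\perp}$ and applying $M$, the fact that $Mx_{\mathcal U}\in\mathcal U$ and $Mx_{\perp}\in\mathcal U^{\perp}$ together with uniqueness of the decomposition forces $Mx_{\mathcal U}=\lambda_{1}(M)\,x_{\mathcal U}$ and $Mx_{\perp}=\lambda_{1}(M)\,x_{\perp}$; thus each nonzero part lies in the one-dimensional $\lambda_{1}(M)$-eigenspace spanned by $x$. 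Two mutually orthogonal vectors that are both scalar multiples of $x$ cannot both be nonzero, and $x_{\perp}\in\mathcal U^{\perp}$ means every part-sum of $x_{\perp}$ vanishes, which is impossible for a nonzero multiple of the strictly positive vector $x$; hence $x_{\perp}=0$ and $x\in\mathcal U$. Therefore $\lambda_{1}(M)$ is an eigenvalue of $B_\Pi$, and combined with $\lambda_{1}(B_\Pi)\leq\lambda_{1}(M)$ this yields $\lambda_{1}(M)=\lambda_{1}(B_\Pi)$.

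I expect the main obstacle to be precisely this last step: the first assertion is pure linear algebra, but pinning the \emph{largest} eigenvalue onto the invariant subspace $\mathcal U$ genuinely requires the hypotheses of nonnegativity and irreducibility. The clean way to package it is the observation that the Perron eigenvector is constant on each part of $\Pi$; the invariance of both $\mathcal U$ and $\mathcal U^{\perp}$ under the symmetric matrix $M$, together with the simplicity and strict positivity of the Perron eigenvector, is exactly what makes that observation rigorous. One must be careful to invoke irreducibility, and not merely nonnegativity, since it is irreducibility that secures both the simplicity of $\lambda_{1}(M)$ and the strict positivity of $x$ used in the final contradiction.
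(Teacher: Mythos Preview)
The paper does not prove this lemma; it is quoted as a known result from Brouwer--Haemers and Godsil--Royle, so there is no in-paper argument to compare against. Your proof is correct and is essentially the standard one found in those references: the intertwining identity $MS=SB_\Pi$ for the characteristic matrix $S$ gives the first assertion, and the combination of the $M$-invariance of $\mathcal U$ and $\mathcal U^{\perp}$ with the simplicity and strict positivity of the Perron eigenvector forces that eigenvector into $\mathcal U$, yielding the second. The care you take in the final paragraph---emphasizing that irreducibility, not mere nonnegativity, is what guarantees both simplicity and strict positivity---is well placed.
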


Recall that $B_{n,n_1}^{i}$ is the graph obtained from $K_{n_1}\cup K_{n-n_{1}}$ by adding $i$ independent edges between $K_{n_1}$ and $K_{n-n_{1}}$.

\begin{lem}\label{lem::2.4}
Let $i\geq 2$, $a\geq i+1$ and $n\geq 2a+2$. Then
$$\rho(B_{n,a+1}^{i})<\rho(B_{n,a}^{i}).$$
\end{lem}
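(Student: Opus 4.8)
The plan is to compare the two graphs via their equitable quotient matrices and Lemma \ref{lem::2.3}. Both $B_{n,a+1}^{i}$ and $B_{n,a}^{i}$ have a natural equitable partition into four cells: the $i$ vertices of the smaller clique incident to a crossing edge, the remaining vertices of the smaller clique, the $i$ vertices of the larger clique incident to a crossing edge, and the remaining vertices of the larger clique. For $B_{n,a}^{i}$ these cells have sizes $i,\ a-i,\ i,\ n-a-i$, and the quotient matrix
$$
B_1=\begin{pmatrix}
i-1 & a-i & 1 & 0\\
i & a-i-1 & 0 & 0\\
1 & 0 & i-1 & n-a-i\\
0 & 0 & i & n-a-i-1
\end{pmatrix}
$$
has $\rho(B_{n,a}^{i})$ as its largest eigenvalue (the matrix is nonnegative and, since both $B_{n,a}^i$ and $B_{n,a+1}^i$ are connected when $i\ge 1$, irreducible). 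The analogous $4\times4$ matrix $B_2$ for $B_{n,a+1}^{i}$ has cell sizes $i,\ a+1-i,\ i,\ n-a-1-i$. Writing the characteristic polynomials $\varphi_1(x)=\det(xI-B_1)$ and $\varphi_2(x)=\det(xI-B_2)$, I want to show $\varphi_2(\rho(B_{n,a}^{i}))>0$, or equivalently that $\varphi_2$ has no root $\ge \rho(B_{n,a}^{i})$, which gives $\rho(B_{n,a+1}^{i})<\rho(B_{n,a}^{i})$ once we know $\varphi_2(x)>0$ for all large $x$ and that $\rho(B_{n,a}^i)$ exceeds the second-largest root of $\varphi_2$.

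The cleanest route is to take the difference $\varphi_2(x)-\varphi_1(x)$. Passing from $B_{n,a}^{i}$ to $B_{n,a+1}^{i}$ simply moves one vertex from the big clique to the small clique, so the two quotient matrices differ only through the substitutions $a-i\mapsto a+1-i$ and $n-a-i\mapsto n-a-1-i$ in two diagonal-ish entries. Expanding both determinants along the rows that change, $\varphi_2(x)-\varphi_1(x)$ should simplify to a product of a low-degree polynomial factor times $\big((n-a-i-1)-(a-i)\big)=n-2a-1$ (up to sign), i.e. it is controlled by the hypothesis $n\ge 2a+2$. Concretely I expect an identity of the shape $\varphi_1(x)-\varphi_2(x)=(n-2a-1)\,g(x)$ where $g(x)$ is an explicit cubic (or quadratic) that is positive at $x=\rho(B_{n,a}^{i})$; since $\varphi_1(\rho(B_{n,a}^{i}))=0$, this yields $\varphi_2(\rho(B_{n,a}^{i}))=-(n-2a-1)g(\rho(B_{n,a}^{i}))<0$. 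Combined with $\varphi_2(x)\to+\infty$ as $x\to\infty$, this forces a root of $\varphi_2$ strictly larger than $\rho(B_{n,a}^{i})$ — the wrong direction — so I will instead phrase it as $\varphi_2(\rho(B_{n,a}^{i}))$ and $\varphi_2'$ having the right signs, or argue directly that $\rho(B_{n,a+1}^i)$, being the Perron root, is the unique root of $\varphi_2$ in $(\rho(B_{n,a+1}^i-e),\infty)$, and locate $\rho(B_{n,a}^i)$ on the correct side using the sign of $g$ together with a crude lower bound $\rho(B_{n,a}^{i})>a-1=\rho(K_a)$ (by interlacing, since $K_a\subseteq B_{n,a}^i$) and $\rho(B_{n,a}^i)\le \rho(K_{n-a})+o(1)$ type estimates to pin down which interval contains it.

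The main obstacle is bookkeeping: verifying that the difference of the two quartic characteristic polynomials factors through $n-2a-1$ with the remaining factor $g(x)$ provably positive in the relevant range $x>a-1$. To make $g(x)>0$ transparent I will use the constraints $a\ge i+1$ and $n\ge 2a+2$ to bound its coefficients, and the lower bound $\rho(B_{n,a}^{i})\ge \max\{a-1,\ \rho(K_{n-a})\}\ge n-a-1\ge a+1$ to evaluate $g$ there. A secondary point requiring care is confirming irreducibility of the quotient matrices and that the Perron root of each $B^i$ is indeed captured by the $4\times4$ quotient (immediate from Lemma \ref{lem::2.3} since the defining partitions are equitable and the graphs are connected for $i\ge1$). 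Once $g(\rho(B_{n,a}^{i}))>0$ is established, the sign chase on $\varphi_2$ completes the proof that $\rho(B_{n,a+1}^{i})<\rho(B_{n,a}^{i})$.
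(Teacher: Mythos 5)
Your setup is exactly the paper's: the four-cell equitable partition, the two $4\times 4$ quotient matrices, comparison of their characteristic polynomials, and the anchor bound $\rho(B_{n,a}^{i})>\rho(K_{n-a})=n-a-1$. The gap is at the decisive step, which is left both uncomputed and with the wrong sign. A direct computation of the two quartics gives
\[
\varphi_2(x)-\varphi_1(x)=(n-2a-1)\,x(x+2),
\]
which is strictly positive for all $x>0$ when $n\ge 2a+2$ (the paper records this difference with an extra factor $(x+2)$, but only positivity matters); that is, the polynomial of the more balanced graph $B_{n,a+1}^{i}$ lies \emph{above} that of $B_{n,a}^{i}$ on the positive axis. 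Your posited identity $\varphi_1-\varphi_2=(n-2a-1)g(x)$ with $g>0$ has the opposite sign; you notice yourself that it leads to ``the wrong direction,'' and the proposal then trails off into unexecuted alternatives (signs of $\varphi_2'$, an interval-location argument involving the undefined $B_{n,a+1}^{i}-e$, an unneeded estimate $\rho\le\rho(K_{n-a})+o(1)$) rather than fixing the sign. A second genuine flaw: your claim that $\varphi_2(\rho(B_{n,a}^{i}))>0$ is ``equivalent'' to $\varphi_2$ having no root $\ge\rho(B_{n,a}^{i})$ is false; a quartic with four real roots can be positive at a point with two roots to its right, so a single sign evaluation cannot locate the Perron root.

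The correct completion (which is what the paper does) uses positivity on the whole ray, not at one point. Set $\rho_1=\rho(B_{n,a}^{i})>n-a-1>0$. By Lemma \ref{lem::2.3} every root of $\varphi_1$ is an eigenvalue of $A(B_{n,a}^{i})$, hence real and at most $\rho_1$, so the monic quartic $\varphi_1$ satisfies $\varphi_1(x)\ge 0$ for all $x\ge\rho_1$. Therefore $\varphi_2(x)=\varphi_1(x)+(n-2a-1)x(x+2)>0$ on all of $[\rho_1,\infty)$, so $\varphi_2$ has no root there; since $\rho(B_{n,a+1}^{i})$ is a root of $\varphi_2$ (again Lemma \ref{lem::2.3}, the graphs being connected), it follows that $\rho(B_{n,a+1}^{i})<\rho_1$. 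With the explicit difference computation (correct sign) and this ray argument inserted, your outline becomes the paper's proof; as written, it is not yet a proof.
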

\renewcommand\proofname{\bf Proof}
\begin{proof}
Since $B_{n,a}^{i}$ contains $K_{n-a}$ as a proper subgraph, it follows that  $\rho(B_{n,a}^{i})>\rho(K_{n-a})=n-a-1$.
Observe that $A(B_{n,a}^{i})$ has the equitable quotient matrix
$$
C_{\Pi}^a=\begin{bmatrix}
i-1 &a-i & 1&0\\
i &a-(i+1) & 0&0\\
1 &0 & i-1&n-(a+i)\\
0 &0 & i&n-(a+i+1)\\
\end{bmatrix}.
$$
By a simple calculation, the characteristic polynomial of $C_{\Pi}^a$ is
\begin{equation*}
\begin{aligned}
\varphi(C_{\Pi}^a,x)=x^4\!+\!(4\!-\!n)x^3\!+\!(an\!-\!a^2\!-\!3n\!+\!5)x^2\!+\!2(an\!-\!a^2
\!-\!i\!-\!n\!+\!1)x\!-\!i^2\!+\!in\!-\!2i.
\end{aligned}
\end{equation*}
Also, note that $A(B_{n,a+1}^{i})$ has the equitable quotient matrix $C_{\Pi}^{a+1}$, which is obtained by replacing $a$ with $a+1$ in $C_{\Pi}^a$. Note that $n\geq 2a+2$. Then
\begin{eqnarray*}
\varphi(C_{\Pi}^{a+1},x)-\varphi(C_{\Pi}^a,x)=x(x+2)^2(n-(2a+1))>0
\end{eqnarray*}
for $x\geq n-a-1$, which implies that
$\lambda_{1}(C_{\Pi}^{a+1})<\lambda_{1}(C_{\Pi}^a)$. Combining this with Lemma \ref{lem::2.3}, we have
$$\rho(B_{n,a+1}^{i})<\rho(B_{n,a}^{i}).$$

This completes the proof. \end{proof}

\begin{lem}(See \cite{HSF,V.N})\label{lem::2.5}
Let $G$ be a graph on $n$ vertices and $m$ edges with $\delta\geq 1$. Then
$$\rho(G) \leq \frac{\delta-1}{2}+\sqrt{2 m-n \delta+\frac{(\delta+1)^{2}}{4}},$$
with equality if and only if $G$ is either a $\delta$-regular graph or a bidegreed graph
in which each vertex is of degree either $\delta$ or $n-1$.
\end{lem}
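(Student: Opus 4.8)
The plan is to reduce the stated estimate to an equivalent polynomial inequality and then prove the latter by a short eigenvector computation. Squaring is harmless here because $\rho=\rho(G)\ge 2m/n\ge\delta>\frac{\delta-1}{2}$, so the bound $\rho\le\frac{\delta-1}{2}+\sqrt{2m-n\delta+\frac{(\delta+1)^2}{4}}$ is equivalent to $\big(\rho-\frac{\delta-1}{2}\big)^2\le 2m-n\delta+\frac{(\delta+1)^2}{4}$, which after expanding and using $(\delta+1)^2-(\delta-1)^2=4\delta$ simplifies to
$$\rho^2-(\delta-1)\rho\le 2m-(n-1)\delta.$$
(Equivalently $(\rho+1)(\rho-\delta)\le 2m-n\delta=\sum_{v}(d_v-\delta)$.) Since the transformation is monotone on the range $\rho\ge\frac{\delta-1}{2}$, the equality cases of the two formulations coincide, so it suffices to prove this polynomial inequality and characterize when it is tight.

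For the inequality itself I would work with a nonnegative eigenvector. Let $x=(x_v)_{v\in V}$ be a Perron eigenvector of $A(G)$ for $\rho$, normalized so that $x\ge 0$, and set $s=\sum_v x_v>0$. Summing the eigenequation $\rho x_v=\sum_{u\sim v}x_u$ over all $v$ and interchanging the order of summation gives the identity $\rho s=\sum_v d_v x_v$. Applying the same idea to $A^2$, the $v$-th row sum of $A^2$ equals $S_v:=\sum_{u\sim v}d_u$, so summing $\rho^2 x_v=(A^2x)_v$ over $v$ yields $\rho^2 s=\sum_v x_v S_v$. These two weighted identities are the crux: the first will feed back into the second after the degree estimate below.

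The final step is a minimum-degree bound on $S_v$. For a fixed $v$ the vertices outside the closed neighbourhood $N[v]$ number $n-1-d_v$ and each has degree at least $\delta$, whence $S_v=2m-d_v-\sum_{u\notin N[v]}d_u\le 2m-(n-1)\delta+(\delta-1)d_v$. Since every $x_v\ge 0$, substituting this into $\rho^2 s=\sum_v x_v S_v$ and invoking $\sum_v d_v x_v=\rho s$ gives $\rho^2 s\le\big(2m-(n-1)\delta\big)s+(\delta-1)\rho s$; dividing by $s>0$ produces exactly $\rho^2-(\delta-1)\rho\le 2m-(n-1)\delta$, as required.

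For the equality discussion, note that the only inequality used is $S_v\le 2m-(n-1)\delta+(\delta-1)d_v$, weighted by $x_v$. Assuming $G$ connected so that $x>0$, equality forces $S_v$ to meet this bound for every $v$, i.e.\ every vertex outside $N[v]$ has degree exactly $\delta$, for all $v$. The clean way to finish is to observe that this says precisely that any vertex of degree exceeding $\delta$ must be adjacent to all others (hence have degree $n-1$); consequently $G$ is either $\delta$-regular or bidegreed with degrees in $\{\delta,n-1\}$, and conversely each such graph makes every $S_v$-bound tight. I expect the genuinely delicate point to be this equality analysis—converting the local tightness condition into the global degree dichotomy and verifying the converse—rather than the inequality, which drops out of the two summation identities almost immediately.
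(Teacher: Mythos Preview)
The paper does not prove this lemma; it is quoted verbatim from Hong--Shu--Fang and Nikiforov and used as a black box. So there is no ``paper's own proof'' to compare against. Your argument is correct and is in fact essentially the classical proof from those references: reduce the radical bound to the quadratic inequality $\rho^{2}-(\delta-1)\rho\le 2m-(n-1)\delta$, then establish the latter via the two Perron-weighted identities $\rho s=\sum_v d_v x_v$ and $\rho^{2}s=\sum_v S_v x_v$ together with the pointwise estimate $S_v\le 2m-(n-1)\delta+(\delta-1)d_v$.

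One small point in the equality discussion: you assume $G$ connected to get $x>0$, but the lemma as stated does not. This is harmless for the inequality (it goes through for each component), and for the equality characterisation note that a bidegreed graph with a vertex of degree $n-1$ is automatically connected, while in the $\delta$-regular case equality is immediate regardless of connectedness; so nothing is lost. You might make this explicit in a final sentence.
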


\begin{lem}(See \cite{HSF,V.N})\label{lem::2.6}
For nonnegative integers $p$ and $q$ with $2q \leq p(p-1)$ and $0 \leq x \leq p-1$, the function $f(x)=(x-1) / 2+\sqrt{2 q-p x+(1+x)^{2} / 4}$ is decreasing with respect to $x$.
\end{lem}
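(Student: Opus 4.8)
The plan is to prove the monotonicity directly by computing $f'(x)$ and showing it is nonpositive on the domain, reducing the sign condition to the stated hypothesis $2q \le p(p-1)$. Write $g(x) = 2q - px + (1+x)^2/4$ for the quantity under the radical, so that $f(x) = (x-1)/2 + \sqrt{g(x)}$. I would first record $g'(x) = -p + (1+x)/2$ and hence
$$f'(x) = \frac{1}{2} + \frac{-p + (1+x)/2}{2\sqrt{g(x)}},$$
valid at every point where $g(x) > 0$.

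The second step is to rearrange the inequality $f'(x) \le 0$ into a form that can be squared. Since $0 \le x \le p-1$ gives $1 + x \le p$, the quantity $p - (1+x)/2 \ge p/2 \ge 0$ is nonnegative, so (multiplying through by $2\sqrt{g(x)}>0$) the condition $f'(x) \le 0$ is equivalent to $p - (1+x)/2 \ge \sqrt{g(x)}$, an inequality between two nonnegative reals. Squaring and expanding $\bigl(p - (1+x)/2\bigr)^2 = p^2 - p(1+x) + (1+x)^2/4$, the term $(1+x)^2/4$ cancels against the same term in $g(x)$, and the linear terms combine as $-p(1+x) + px = -p$. What remains is exactly $p^2 - p \ge 2q$, i.e. $p(p-1) \ge 2q$, which is precisely the hypothesis. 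Thus $f'(x) \le 0$ throughout the region where $g(x) > 0$, and monotonicity on the full interval follows by continuity.

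I expect no serious obstacle here, since the argument is a short calculus computation whose entire content is the algebraic identity that collapses $f'(x)\le 0$ to $p(p-1)\ge 2q$; the only points requiring care are bookkeeping ones. First, one must confirm that both sides are nonnegative before squaring, which is exactly why I isolate the estimate $1 + x \le p$ coming from the domain constraint $x \le p-1$. Second, the derivative formula presumes $g(x) > 0$, so I would note that $f$ is well-defined where $g(x) \ge 0$ and that the conclusion extends to any boundary point at which $g$ vanishes, where $f$ acquires a downward vertical tangent and remains nonincreasing. With these remarks in place, the reduction to $p(p-1) \ge 2q$ completes the proof.
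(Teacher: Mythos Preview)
Your argument is correct: the derivative computation and the reduction of $f'(x)\le 0$ to $p(p-1)\ge 2q$ via the squaring step (justified by $1+x\le p$) are both valid. Note that the paper does not supply its own proof of this lemma but merely cites it from \cite{HSF,V.N}, so there is nothing to compare against; your self-contained calculus verification is exactly the standard one.
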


\begin{lem}\label{lem::2.7}
For $t=3,4$, let $n_1,\ldots,n_t$ be positive integers such that $n=\sum_{i=1}^{t}n_i$ and $n_t\geq\max\{n_1,n_2,\ldots,n_{t-1}\}$. If $n_i\geq a_{i}$ for $1\leq i\leq t-1$, then
$${n_1\choose 2}+{n_2\choose 2}+\ldots+{n_t\choose 2}\leq {a_1\choose 2}+{a_2\choose 2}+\ldots+{n-\sum_{1\leq i\leq t-1}a_{i}\choose 2},$$
with equality holding if and only if $(n_1,n_2,\ldots,n_t)=(a_1,a_2,\ldots,n-\sum_{1\leq i\leq t-1}a_{i})$.
\end{lem}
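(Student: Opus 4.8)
The plan is to prove the inequality by a discrete compression (charge-shifting) argument exploiting the convexity of the function $g(x)=\binom{x}{2}=x(x-1)/2$. Since $n_t$ is the largest part, shifting one unit from any part $n_j$ with $j\le t-1$ onto $n_t$ cannot decrease $\sum_{i}\binom{n_i}{2}$; iterating this until every $n_j$ with $j\le t-1$ has been pushed down to its lower bound $a_j$ transforms the tuple into $(a_1,\dots,a_{t-1},\,n-\sum_{1\le i\le t-1}a_i)$ and yields the claimed bound. (The restriction $t\in\{3,4\}$ plays no role; the argument works for every $t$.)

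Concretely, suppose $n_j\ge a_j+1$ for some $j\in\{1,\dots,t-1\}$. Replace $n_j$ by $n_j-1$ and $n_t$ by $n_t+1$, leaving the other parts unchanged; then $n$ is unchanged, every part is still a positive integer, and $n_j-1\ge a_j$. The resulting change in the objective is
$$\Bigl[\binom{n_j-1}{2}+\binom{n_t+1}{2}\Bigr]-\Bigl[\binom{n_j}{2}+\binom{n_t}{2}\Bigr]=\Bigl(\binom{n_t+1}{2}-\binom{n_t}{2}\Bigr)-\Bigl(\binom{n_j}{2}-\binom{n_j-1}{2}\Bigr)=n_t-(n_j-1)\ge 1,$$
because $n_t\ge n_j$ by hypothesis. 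Hence the objective strictly increases. Moreover the new tuple still has its last coordinate maximal, since $n_t$ grew while the other parts did not, so the hypothesis $n_t\ge\max\{n_1,\dots,n_{t-1}\}$ is preserved and the step can be repeated.

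After finitely many such steps every $n_j$ with $j\le t-1$ equals $a_j$, so the last part equals $n-\sum_{1\le i\le t-1}a_i$, and since each step strictly increased $\sum_{i}\binom{n_i}{2}$, the inequality follows; equality holds exactly when no step was ever performed, i.e. when $n_i=a_i$ for all $i\le t-1$ and hence $n_t=n-\sum_{1\le i\le t-1}a_i$. I do not expect a real obstacle here: the only care required is the bookkeeping that each compression step preserves both $n_j\ge a_j$ and the maximality of the last part. Alternatively, one may derive the inequality from Karamata's inequality applied to the convex function $g$, observing that $(a_1,\dots,a_{t-1},n-\sum_{1\le i\le t-1}a_i)$ majorizes $(n_1,\dots,n_t)$ after sorting, but the compression argument yields the equality case more transparently.
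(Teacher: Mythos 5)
Your argument is correct, and it is a genuinely different (and cleaner) route than the paper's. The paper proves the lemma by a one-shot algebraic computation: writing $n_i=a_i+x_i$ with $x_i\ge 0$ for $i\le t-1$, expanding the binomial coefficients, and bounding the resulting quadratic expression from below by $\sum_i x_i\bigl(n-\sum_{i\le t-1}(a_i+x_i)-\max_i a_i\bigr)\ge 0$, where the hypothesis $n_t\ge\max\{n_1,\dots,n_{t-1}\}$ enters only at the last step; the equality case is read off from this chain ($x_1=\dots=x_{t-1}=0$). You instead use a discrete smoothing/exchange argument: transferring one unit from a part $n_j>a_j$ to the maximal part changes the sum by $n_t-(n_j-1)\ge 1$, the maximality of the last part and the constraints $n_j\ge a_j$ are preserved, and iterating terminates at the extremal tuple. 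Your bookkeeping is right, the strict increase per step gives the equality characterization more transparently than the paper's chain of inequalities, and your observation that the restriction $t\in\{3,4\}$ is immaterial is correct (the paper only needs those two cases, and for $t=3$ it just says ``similarly''). The only nitpick is the incidental claim that every intermediate part remains a \emph{positive} integer, which implicitly uses $a_j\ge 1$; this is harmless, since positivity of intermediate parts is never actually needed—only the values of the sums along the chain matter. Either write-up is acceptable; yours generalizes to arbitrary $t$ and to any convex function in place of $\binom{x}{2}$, while the paper's is a self-contained calculation tailored to the two cases it uses.
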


\begin{proof}
For $1\leq i\leq t-1$, let $n_i=a_{i}+x_{i}$, where $x_i\geq 0$. If $t=4$,
then $n=\sum_{i=1}^{4}n_{i}$ and $n_4\geq\max\{n_1,n_2,n_3\}\geq \max\{a_1,a_2,a_3\}$. Note that $n_4=n-\sum_{i=1}^{3}(a_{i}+x_i)$ and
$n\geq \sum_{i=1}^{3}(a_{i}+x_i)+\max\{a_1,a_2,a_3\}$. Then
\begin{equation*}
\begin{aligned}
&{a_1\choose 2}+{a_2\choose 2}+{a_3\choose 2}+{n-(a_1+a_2+a_3)\choose 2}\!-\!{n_1\choose 2}\!-\!{n_2\choose 2}\!-\!{n_3\choose 2}\!-\!{n_4\choose 2}\\
&=\sum_{i=1}^{3}x_i(n\!-\!\sum_{i=1}^{3}a_{i})\!-\!x_1a_1\!-\!x_2a_2\!-\!x_3a_3\!-\!((x_1\!+\!x_2\!+\!x_3)x_{1}\!+\!x_{2}^2\!+\!x_{3}x_{2}\!+\!x_{3}^2)\\
&\geq \sum_{i=1}^{3}x_i(n\!-\!\sum_{i=1}^{3}a_{i}\!-\!\max\{a_1,a_2,a_3\})\!-\!(x_1\!+\!x_2\!+\!x_3)^2~(\mbox{since $x_i\geq 0$ for $1\leq i\leq 3$})\\
&= \sum_{i=1}^{3}x_i(n-\sum_{i=1}^{3}(a_{i}+x_i)-\max\{a_1,a_2,a_3\})\\
&\geq 0,
\end{aligned}
\end{equation*}
with equality holding if and only if $x_1=x_2=x_3=0$.
If $t=3$, by using a similar analysis as above, we also get the result.\end{proof}

For any partition $\pi$ of $V(G)$, $e_{G}(\pi)$ denotes the number of edges of $G$ whose ends lie in different parts of $\pi$. For $X\subset V(G)$, let $G[X]$ be the subgraph of $G$ induced by $X$, and let $e_{G}(X)$ be the number of edges in $G[X]$. Now we shall give the proof of Theorem \ref{thm::1.1}.

\renewcommand\proofname{\bf Proof of Theorem \ref{thm::1.1}}
\begin{proof}
Suppose that $G$ is not rigid, by Lemma \ref{lem::2.1}, there exist some $Z\subset V(G)$ and some partition $\pi$ of $V(G-Z)$ with $n_{0}$ trivial parts $v_1,v_2,\ldots, v_{n_0}$ and $n_{0}'$ nontrivial parts $V_1,V_2,\ldots,V_{n_0'}$ such that
\begin{equation}\label{equ::1}
\begin{aligned}
e_{G-Z}(\pi)\leq (3-|Z|)n_{0}'+2n_0-4-n_{Z}(\pi),
\end{aligned}
\end{equation}
where $n_{Z}(\pi)=\sum_{1\leq i\leq n_0}|Z_i|$ and $Z_i$ is the set of vertices in $Z$ that are adjacent to $v_i$ for $1\leq i\leq n_0$. Note that $d_{G-Z}(v_i)\geq \delta-|Z_i|$. Then
\begin{equation}\label{equ::2}
\begin{aligned}
e_{G-Z}(\pi)&= \frac{1}{2}(\sum_{1\leq i\leq n_{0}'}|\partial_{G-Z}(V_i)|+\sum_{1\leq j\leq n_0}d_{G-Z}(v_j))\\
&\geq \frac{1}{2}(\sum_{1\leq i\leq n_{0}'}|\partial_{G-Z}(V_i)|+\delta n_0-\sum_{1\leq j\leq n_0}|Z_j|)\\
&\geq \frac{1}{2}(\sum_{1\leq i\leq n_{0}'}|\partial_{G-Z}(V_i)|+6n_0-n_{Z}(\pi))~(\mbox{since $\delta\geq 6$}),
\end{aligned}
\end{equation}
and so
\begin{equation}\label{equ::3}
\begin{aligned}
e_{G-Z}(\pi)&\geq 3n_{0}-\frac{1}{2}n_{Z}(\pi).
           \end{aligned}
\end{equation}
Thus, we have the following two claims.

{\flushleft\bf Claim 1.} $|Z|\leq 2$.

Otherwise, $|Z|\geq 3$. Thus
\begin{equation*}
\begin{aligned}
e_{G-Z}(\pi)\leq (3-|Z|)n_{0}'+2n_0-4-n_{Z}(\pi)\leq 2n_0-4-n_{Z}(\pi)
\end{aligned}
\end{equation*}
by (\ref{equ::1}). Combining this with (\ref{equ::3}), we have $n_0+4+\frac{1}{2}n_{Z}(\pi)\leq 0$, which is impossible because $n_{0}\geq 0$ and $n_{Z}(\pi)\geq 0$. This implies that $|Z|\leq 2$, as required.

{\flushleft\bf Claim 2.} $n_{0}'\geq 2$.

Otherwise, $n_{0}'\leq 1$. By Claim 1, we have $0\leq |Z|\leq 2$, and so
\begin{equation*}
\begin{aligned}
e_{G-Z}(\pi)\leq (3-|Z|)n_{0}'+2n_0-4-n_{Z}(\pi)\leq 2n_0-1-n_{Z}(\pi)
\end{aligned}
\end{equation*}
by (\ref{equ::1}). Combining this with (\ref{equ::3}), we have $n_0+1+\frac{1}{2}n_{Z}(\pi)\leq 0$, which is impossible because $n_{0}\geq 0$ and $n_{Z}(\pi)\geq 0$. This implies that $n_{0}'\geq 2$, as required.

Note that $\rho(G)\geq\rho(B_{n,\delta+1}^{2})>\rho(K_{n-\delta-1})=n-\delta-2$. Combining this with Lemmas \ref{lem::2.5} and \ref{lem::2.6}, we have
\begin{equation}\label{equ::4}
m > \frac{n^2}{2}-\frac{(2\delta+3)n}{2}+(\delta+1)^{2}.
\end{equation}
Since $G$ is a 2-connected graph, it follows that
\begin{equation}\label{equ::5}
|\partial_{G-Z}(V_i)|\geq 2-|Z|
\end{equation}
for $1\leq i\leq n_{0}'$. Recall that $0\leq |Z|\leq 2$. Then we divide the proof into the following two cases.

{\flushleft\bf Case 1.} $0\leq|Z|\leq 1$.

If $n_{0}'=2$, then the partition $\pi$ consists of two nontrivial parts $V_1,V_2$ and $n_0$ trivial parts. Putting (\ref{equ::5}) into (\ref{equ::2}), we get
$$e_{G-Z}(\pi)\geq \frac{1}{2}(|\partial_{G-Z}(V_1)|+|\partial_{G-Z}(V_2)|+6n_0-n_{Z}(\pi))\geq 2-|Z|+3n_0-\frac{1}{2}n_{Z}(\pi).$$
Combining this with (\ref{equ::1}) and $n_{0}'=2$, we have
$$-n_{0}-\frac{1}{2}n_{Z}(\pi)-|Z|\geq 0,$$
and so $n_{0}=0$, $n_{Z}(\pi)=0$ and $|Z|=0$ by the fact $n_{0}\geq0$, $n_{Z}(\pi)\geq0$ and $|Z|\geq 0$. This implies that the partition $\pi$ consists of two nontrivial parts $V_1,V_2$ and $G-Z=G$. Then $V(G)=V_1\cup V_2$ and $e_{G}(\pi)=e_{G}(V_1,V_2)\leq 2$ by (\ref{equ::1}). Notice that $e_{G}(V_1,V_2)=\frac{1}{2}(|\partial_{G}(V_1)|+|\partial_{G}(V_2)|)\geq 2$ by (\ref{equ::5}). Thus, $e_{G}(V_1,V_2)= 2$. Let $E_{G}(V_1,V_2)=\{f_1,f_2\}$. Then we assert that $f_1$ and $f_2$ are not incident. Otherwise, $f_1\cap f_2=\{u\}$. It is easy to see that $u$ is a cut vertex of $G$, which is impossible because $G$ is a 2-connected graph. One can verify that $G$ is a spanning subgraph of $B_{n,|V_1|}^{2}$. Then
$$\rho(G)\leq \rho(B_{n,|V_1|}^{2}),$$
with equality holding if and only if $G\cong B_{n,|V_1|}^{2}$.
Note that $|\partial_{G}(V_1)|=|\partial_{G}(V_2)|= 2<\delta-1$. Combining this with Lemma \ref{lem::2.2}, we obtain that $\min\{|V_1|,|V_2|\}\geq\delta+1$. By Lemma \ref{lem::2.4}, we have
$$\rho(G)\leq\rho(B_{n,\delta+1}^{2}),$$
with equality holding if and only if $G\cong B_{n,\delta+1}^{2}$.
It is impossible because
$\rho(G)\geq\rho(B_{n,\delta+1}^{2})$ and $G\ncong B_{n,\delta+1}^{2}$.

Let $\delta(G-Z)=\delta'$. Then $\delta'\geq\delta-|Z|$. If $n_{0}'\geq3$, then we assert that there exist at least two nontrivial parts $V_1,V_2$ of partition $\pi$ such
that $|\partial_{G-Z}(V_i)|\leq \delta'-1$ for $i=1,2$. Otherwise, there exists at most one nontrivial part $V_j$ of partition $\pi$ such that $|\partial_{G}(V_j)|\leq \delta'-1$ for some $1\leq j\leq n_0'$. Thus, $|\partial_{G}(V_i)|\geq \delta'$ for $1\leq i\neq j\leq n_0'$, and so
\begin{equation*}
\begin{aligned}
&2e_{G-Z}(\pi)\\
&= \sum_{1\leq i\leq n_0'}|\partial_{G-Z}(V_i)|+\sum_{1\leq j\leq n_0}d_{G-Z}(v_j)\\
&\geq (n_0'-1)\delta'+2-|Z|+\delta n_0-n_{Z}(\pi)~(\mbox{since $|\partial_{G-Z}(V_j)|\geq 2-|Z|$})\\
&\geq (n_0'-1)(\delta-|Z|)+2-|Z|+\delta n_0-n_{Z}(\pi)~(\mbox{since $\delta'\geq\delta-|Z|$})\\
&=2(3\!-\!|Z|)n_0'\!+\!4n_0\!-\!8\!-\!2n_{Z}(\pi)\!+\!(\delta\!-\!6\!+\!|Z|)n_{0}'\!+\!(\delta\!-\!4)n_0\!-\!\delta\!+\!10\!+\!n_{Z}(\pi)~\\
&\geq 2(3\!-\!|Z|)n_0'\!+\!4n_0\!-\!8\!-\!2n_{Z}(\pi)\!+\!2\delta\!-\!8\!+\!3|Z|\!+\!n_{Z}(\pi)~(\mbox{since $n_{0}'\geq 3$ and $n_0\geq 0$})\\
&> 2(3\!-\!|Z|)n_0'\!+\!4n_0\!-\!8\!-\!2n_{Z}(\pi) ~(\mbox{since $\delta\geq 6$, $n_{Z}(\pi)\geq 0$ and $0\leq|Z|\leq1$}),
\end{aligned}
\end{equation*}
which contradicts with (\ref{equ::1}). This implies that there exist two nontrivial parts $V_1,V_2$ of partition $\pi$ such
that $|\partial_{G-Z}(V_i)|\leq \delta'-1$ for $i=1,2$. Therefore, $|V_i|\geq \delta'+1$ for $i=1,2$ by Lemma \ref{lem::2.2}.
If $|Z|=0$, then $\delta'=\delta$, and so $|V_i|\geq \delta+1$ for $i=1,2$. Since $n_0'\geq 3$, we have $|V_3|\geq 2$ and $n_0\leq n-\sum_{i=1}^{3}|V_i|\leq n-2\delta-4$, and so
$$e_{G}(\pi)\leq 3n_{0}'+2n_0-4\leq 3\cdot 3+2(n-2\delta-4)-4=2n-4\delta-3$$
by (\ref{equ::1}). Combining this with Lemma \ref{lem::2.7}, we have
\begin{equation*}
\begin{aligned}
m&\leq \max\left\{{\delta+1\choose 2} +{n-\delta-3\choose 2}+{2\choose 2},2{\delta+1\choose 2} +{n-2\delta-2\choose 2}\right\}+e_{G}(\pi)\\
&\leq{\delta+1\choose 2} +{n-\delta-3\choose 2}+{2\choose 2}+e_{G}(\pi)\\
&\leq \frac{n^2}{2}-\frac{(2\delta+3)n}{2}+\delta^2+4.
\end{aligned}
\end{equation*}
Combining this with (\ref{equ::4}), we have $\delta<\frac{3}{2}$, which is impossible because $\delta\geq 6$.
We assume that $|Z|=1$ in the following. Note that $\delta'\geq\delta-1$. Then $|V_i|\geq\delta$. Since $n_{0}'\geq 3$, we have $|V_3|\geq 2$, $n_0\leq n-|Z|-\sum_{i=1}^{3}V_{i}\leq n-2\delta-3$ and $|\partial_{G}(Z)|-n_{Z}(\pi)\leq n-n_{0}-1$. Combining this with (\ref{equ::1}), we have
\begin{equation*}
\begin{aligned}
e_{G-Z}(\pi)+|\partial_{G}(Z)|&\leq 2n_{0}'+2n_0-4-n_{Z}(\pi)+|\partial_{G}(Z)|\\
&\leq 2n_{0}'+n_0+n-5\\
&\leq 2\cdot 3+(n-2\delta-3)+n-5\\
&=2n-2\delta-2.
\end{aligned}
\end{equation*}
By Lemma \ref{lem::2.7}, we get
\begin{equation*}
\begin{aligned}
m&\leq\max\left\{{\delta\choose 2}\!+\!{n\!-\!|Z|\!-\!\delta\!-\!2\choose 2}\!+\!{2\choose 2},2{\delta\choose 2}\!+\!{n\!-\!|Z|\!-\!2\delta\choose 2}\right\}\!+\!e_{G\!-\!Z}(\pi)\!+\!|\partial_{G}(Z)|\\
&\leq {\delta\choose 2}\!+\!{n\!-\!|Z|\!-\!\delta\!-\!2\choose 2}\!+\!{2\choose 2}+e_{G-Z}(\pi)+|\partial_{G}(Z)|\\
&\leq \frac{n^2}{2}-\frac{(2\delta+3)n}{2}+\delta^2+\delta+5.
\end{aligned}
\end{equation*}
Combining this with (\ref{equ::4}), we have $\delta<4$, which is impossible because $\delta\geq 6$.

{\flushleft\bf Case 2.} $|Z|=2$.

By (\ref{equ::1}), we have
\begin{equation}\label{equ::6}
\begin{aligned}
e_{G-Z}(\pi)\leq n_{0}'+2n_0-4-n_{Z}(\pi).
\end{aligned}
\end{equation}
If $2\leq n_{0}'\leq 3$, by (\ref{equ::2}), (\ref{equ::5}) and (\ref{equ::6}), we have
$$0\leq\sum_{1\leq i\leq n_{0}'}|\partial_{G-Z}(V_i)|\leq 2n_{0}'-8-2n_{0}-n_{Z}(\pi)\leq -2$$
by the fact $n_{Z}(\pi)\geq 0$ and $n_0\geq 0$, a contradiction.  Let $\delta(G-Z)=\delta'$. Then $\delta'\geq\delta-2$. If $n_{0}'\geq 4$, then we assert that there exist at least two nontrivial parts $V_1,V_2$ of partition $\pi$ such
that $|\partial_{G-Z}(V_i)|\leq \delta'-1$ for $i=1,2$. Otherwise, there exists at most one nontrivial part $V_j$ of partition $\pi$ such that $|\partial_{G-Z}(V_j)|\leq \delta'-1$ for some $1\leq j\leq n_0'$. Note that $|\partial_{G-Z}(V_i)|\geq \delta'$ for $1\leq i\neq j\leq n_{0}'$ and  $|\partial_{G-Z}(V_j)|\geq 0$. Then
\begin{equation*}
\begin{aligned}
2e_{G-Z}(\pi)&= \sum_{1\leq i\leq n_{0}'}|\partial_{G-Z}(V_i)|+\sum_{1\leq j\leq n_0}d_{G-Z}(v_j)\\
&\geq (n_0'-1)\delta'+\delta n_0-n_{Z}(\pi)\\
&\geq (n_0'-1)(\delta-2)+\delta n_0-n_{Z}(\pi)~(\mbox{since $\delta'\geq \delta-2$})\\
&=(2n_{0}'+4n_0-8-2n_{Z}(\pi))+(\delta-4)n_{0}'-\delta+(\delta-4)n_0+n_{Z}(\pi)+10\\
&\geq 2n_{0}'+4n_0-8-2n_{Z}(\pi)+3\delta-6~(\mbox{since $n_{0}'\geq 4$, $n_0\geq 0$ and  $n_{Z}(\pi)\geq 0$})\\
&>2n_{0}'+4n_0-8-2n_{Z}(\pi)~(\mbox{since $\delta\geq 6$}),
\end{aligned}
\end{equation*}
which contradicts with (\ref{equ::6}). This implies that there exist two nontrivial parts $V_1,V_2$ of partition $\pi$ such
that $|\partial_{G-Z}(V_i)|\leq \delta'-1$ for $i=1,2$. Therefore, by Lemmas \ref{lem::2.2}, we have $|V_i|\geq \delta'+1\geq\delta-1$ for $i=1,2$. Since $n_{0}'\geq 4$, $|V_3|\geq 2$ and $|V_4|\geq 2$, we have $n_{0}'\leq \frac{n-|Z|-2(\delta-1)-4}{2}+4=\frac{n}{2}-\delta+2$. Note that $|\partial_{G}(Z)|+e_{G}(Z)-n_{Z}(\pi)\leq 2(n-2-n_{0})+1$. Combining this with (\ref{equ::6}), we have
\begin{equation*}
\begin{aligned}
e_{G-Z}(\pi)\!+\!|\partial_{G}(Z)|\!+\!e_{G}(Z)&\leq n_{0}'+2n-7\leq\frac{5n}{2}-\delta-5.
\end{aligned}
\end{equation*}
By Lemma \ref{lem::2.7}, we get
\begin{equation*}
\begin{aligned}
m&\leq\max\left\{{\delta\!-\!1\choose 2}\!+\!2{2\choose 2}\!+\!{n\!-\!|Z|\!-\!\delta\!-\!3\choose 2},2{\delta\!-\!1\choose 2}\!+\!{2\choose 2}\!+\!{n\!-\!|Z|\!-\!2\delta\choose 2}\right\}\\
&~~\!+\!e_{G-Z}(\pi)\!+\!|\partial_{G}(Z)|\!+\!e_{G}(Z)\\
&\leq {\delta\!-\!1\choose 2}\!+\!2{2\choose 2}\!+\!{n\!-\!|Z|\!-\!\delta\!-\!3\choose 2}+e_{G-Z}(\pi)\!+\!|\partial_{G}(Z)|\!+\!e_{G}(Z)\\
&\leq \frac{n^2}{2}-\frac{(2\delta+6)n}{2}+\delta^2+3\delta+13.
\end{aligned}
\end{equation*}
Combining this with (\ref{equ::4}), we have $n<\frac{2}{3}\delta+8$, which is impossible because $n\geq 2\delta+4$ and $\delta\geq6$.

This completes the proof.\end{proof}

\section{Proof of Theorem \ref{thm::1.2}}

The following sufficient and necessary conditions for globally rigid in $R^2$ was established by Connelly \cite{Connelly}, Jackson and Jord\'{a}n \cite{Jackson-1}.

\begin{lem}(See \cite{Connelly,Jackson-1})\label{lem::3.1}
Let $G$ be a graph. Then $G$ is globally rigid if and only if either $G$ is a
complete graph on at most three vertices or $G$ is 3-connected and redundantly rigid.
\end{lem}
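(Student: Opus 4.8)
The plan is to prove the two implications separately. \emph{Necessity} (global rigidity forces the structural conditions) follows Hendrickson's reasoning \cite{Hendrickson}, while \emph{sufficiency} (the structural conditions force global rigidity) combines Connelly's stress-matrix criterion \cite{Connelly} with the inductive combinatorial construction of Jackson and Jord\'{a}n \cite{Jackson-1}. For necessity, suppose $(G,p)$ is a globally rigid generic framework with $n\geq 4$ vertices; the complete graphs on at most three vertices are handled separately below. First I would show $G$ must be $3$-connected: if not, there is a vertex pair $\{u,v\}$ whose deletion splits $G$ into parts $H_1,H_2$, and reflecting the realization of $H_1$ in the line through $p(u)$ and $p(v)$ preserves every edge length within $H_1$ and every edge length incident to $u$ or $v$, hence produces a framework equivalent to $(G,p)$ but not congruent to it, contradicting global rigidity. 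Next I would show $G$ is redundantly rigid: if $G-e$ were non-rigid for some edge $e=xy$, then the generic framework $(G-e,p)$ admits a nontrivial analytic flex, along which the distance $\|p(x)-p(y)\|$ varies (it cannot be constant, since $G$ itself is rigid); tracking the flex and restoring this distance to its original value yields a second realization equivalent to $(G,p)$ but not congruent, again a contradiction.

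For sufficiency I would dispose of the exceptional case at once: $K_1$, $K_2$ and $K_3$ are globally rigid by inspection, since a point, a fixed-length segment, and a triangle with prescribed side lengths each have a unique shape up to congruence. The substantive case is a $3$-connected redundantly rigid graph $G$ on $n\geq 4$ vertices. Here I would invoke Connelly's theorem: a generic $d$-dimensional framework carrying an equilibrium stress whose stress matrix $\Omega$ attains the maximum possible rank $n-d-1$ is globally rigid. In $R^2$ this target rank is $n-3$, so the entire problem reduces to the following purely combinatorial claim: \emph{every $3$-connected redundantly rigid graph on $n\geq 4$ vertices admits a generic realization whose stress matrix has rank $n-3$.}

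To prove this claim I would argue by induction on $n$, with base case $K_4$, which carries a full-rank stress. The inductive engine is an inverse-Henneberg reduction together with Laman's count \cite{Leman}: one shows that any such $G$ either contains an edge $e$ for which $G-e$ is still $3$-connected and redundantly rigid, or contains a degree-$3$ vertex whose $1$-reduction (delete the vertex and reinsert one edge between two of its former neighbours) yields a smaller $3$-connected redundantly rigid graph $G'$. By induction $G'$ has a rank-$(n-4)$ stress, and I would then verify that the maximal-rank property survives the reverse operation: edge addition preserves it trivially, while the $1$-extension reversing the reduction preserves it by a genericity argument guaranteeing the extended equilibrium stress remains nondegenerate and gains exactly one to its rank.

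The hard part will be the inductive step, and specifically the connectivity bookkeeping. A naive $1$-reduction can destroy $3$-connectivity or redundant rigidity, so the delicate point is to prove that \emph{some} admissible choice among the candidate degree-$3$ vertices and candidate removable edges always exists while keeping both properties intact; this is exactly where the two hypotheses on $G$ must be used in tandem rather than separately, and it is the technical core of the Jackson--Jord\'{a}n argument. Verifying that the $1$-extension lifts a rank-$(n-4)$ stress to a rank-$(n-3)$ stress is the second obstacle, since it requires controlling how the stress space transforms under the operation and invoking generic nondegeneracy to rule out a rank drop. Once both are in place, Connelly's criterion closes the sufficiency direction and completes the characterization.
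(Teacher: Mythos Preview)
The paper does not supply a proof of this lemma at all: it is quoted verbatim from the literature with the citation ``(See \cite{Connelly,Jackson-1})'' and then used as a black box in the proof of Theorem~\ref{thm::1.2}. So there is nothing in the paper to compare your proposal against.

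That said, your outline is a faithful high-level summary of how the result is actually established in the cited sources: Hendrickson's reflection and flexing arguments for necessity, and the Connelly stress-matrix criterion combined with the Jackson--Jord\'{a}n inductive construction for sufficiency. The places you flag as delicate (showing an admissible $1$-reduction always exists while preserving both $3$-connectivity and redundant rigidity, and verifying that $1$-extension lifts a maximal-rank stress) are indeed the technical heart of \cite{Jackson-1} and \cite{Connelly} respectively, and are not short. For the purposes of the present paper, simply citing the lemma---as the authors do---is the appropriate choice; your sketch would belong in a survey, not here.
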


For any partition $\pi$ of $V(G)$, $E_{G}(\pi)$ denotes the edge set of $G$ whose ends lie in different parts of $\pi$. Now we shall give the proof of Theorem \ref{thm::1.2}.

\renewcommand\proofname{\bf Proof of Theorem \ref{thm::1.2}}
\begin{proof}
Suppose that $G$ is not  globally rigid. Note that $G$ is a 3-connected graph and $n\geq 2\delta+4$. Then $G$ is not redundantly rigid by Lemma \ref{lem::3.1}, there exists some $f\in E(G)$ such that $G-f$ is not rigid. According to Lemma \ref{lem::2.1}, there exist some $Z\subset V(G-f)$ and some partition $\pi$ of $V(G-f-Z)$ with $n_{0}$ trivial parts $v_1,v_2,\ldots, v_{n_0}$ and $n_{0}'$ nontrivial parts $V_1,V_2,\ldots,V_{n_0'}$ such that
\begin{equation}\label{equ::7}
\begin{aligned}
e_{G-f-Z}(\pi)\leq (3-|Z|)n_{0}'+2n_0-4-n_{Z}(\pi).
\end{aligned}
\end{equation}
If $f\notin E_{G}(\pi)$, then
$$e_{G-Z}(\pi)=e_{G-f-Z}(\pi)\leq (3-|Z|)n_{0}'+2n_0-4-n_{Z}(\pi).$$
By using the same analysis as Theorem \ref{thm::1.1}, we can deduce a contradiction. Thus, we consider the case of $f\in E_{G}(\pi)$ in the following. Note that $f\in E_{G}(\pi)$. Then $e_{G-f-Z}=e_{G-Z}(\pi)-1$, and so
\begin{equation}\label{equ::8}
\begin{aligned}
e_{G-Z}(\pi)\leq (3-|Z|)n_{0}'+2n_0-3-n_{Z}(\pi)
\end{aligned}
\end{equation}
by (\ref{equ::7}). Recall that $n_{Z}(\pi)=\sum_{1\leq i\leq n_0}|Z_i|$, where $Z_i$ is the set of vertices in $Z$ that are adjacent to $v_i$ for $1\leq i\leq n_0$. Note that $d_{G-Z}(v_i)\geq \delta-|Z_i|$. Then
\begin{equation}\label{equ::9}
\begin{aligned}
e_{G-Z}(\pi)&= \frac{1}{2}( \sum_{1\leq i\leq n_{0}'}|\partial_{G-Z}(V_i)|+\sum_{1\leq j\leq n_0}d_{G-Z}(v_j))\\
&\geq \frac{1}{2}(\sum_{1\leq i\leq n_{0}'}|\partial_{G-Z}(V_i)|+6n_0-n_{Z}(\pi))~(\mbox{since $\delta\geq 6$}),
\end{aligned}
\end{equation}
and so
\begin{equation}\label{equ::10}
\begin{aligned}
e_{G-Z}(\pi)\geq 3n_{0}-\frac{1}{2}n_{Z}(\pi)~.
\end{aligned}
\end{equation}
Thus, we have the following two claims.

{\flushleft\bf Claim 1.} $|Z|\leq 2$.

Otherwise, $|Z|\geq 3$, by (\ref{equ::8}), we get
\begin{equation*}
\begin{aligned}
e_{G-Z}(\pi)\leq (3-|Z|)n_{0}'+2n_0-3-n_{Z}(\pi)\leq 2n_0-3-n_{Z}(\pi).
\end{aligned}
\end{equation*}
Combining this with (\ref{equ::10}), we have  $n_0+\frac{1}{2}n_{Z}(\pi)+3\leq 0$, which is impossible because $n_{0}\geq 0$ and $n_{Z}(\pi)\geq 0$. This implies that $|Z|\leq 2$, as required.

{\flushleft\bf Claim 2.} $n_{0}'\geq 2$.

Otherwise, $n_{0}'\leq 1$. Note that $0\leq |Z|\leq 2$ and $n_{Z}(\pi)\geq 0$. Then
\begin{equation}\label{equ::11}
\begin{aligned}
e_{G-Z}(\pi)\leq (3-|Z|)n_{0}'+2n_0-3-n_{Z}(\pi)\leq 2n_0-n_{Z}(\pi)
\end{aligned}
\end{equation}
by (\ref{equ::8}). Combining (\ref{equ::10}) with (\ref{equ::11}), we have
\begin{equation*}
\begin{aligned}
n_{0}+\frac{1}{2}n_{Z}(\pi)\leq 0.
\end{aligned}
\end{equation*}
If $n_0\geq 1$ or $n_{Z}(\pi)\geq 1$, then we can deduce a contradiction. This implies that all the equalities hold in (\ref{equ::10}) and (\ref{equ::11}), and so $n_0=0$, $n_{0}'=1$ and $|Z|=0$. Thus $e_{G-f-Z}(\pi)\leq -1$ by (\ref{equ::7}), which also leads to a contradiction. Therefore, $n_{0}'\geq 2$, as required.

Note that $\rho(G)\geq\rho(B_{n,\delta+1}^{3})>\rho(K_{n-\delta-1})=n-\delta-2$. Combining this with Lemmas \ref{lem::2.5} and \ref{lem::2.6}, we have
\begin{equation*}
m > \frac{n^2}{2}-\frac{(2\delta+3)n}{2}+(\delta+1)^{2}.
\end{equation*}
Since $G$ is a 3-connected graph, it follows that
\begin{equation}\label{equ::12}
|\partial_{G}(V_i)|\geq 3-|Z|.
\end{equation}
Recall that $0\leq |Z|\leq 2$. Then we divide the proof into the following cases.

{\flushleft\bf Case 1.} $0\leq |Z|\leq 1$.

If $n_{0}'=2$, then the partition $\pi$ consists of two nontrivial parts $V_1,V_2$ and $n_0$ trivial parts. Putting (\ref{equ::12}) into (\ref{equ::9}), we get
$$e_{G-Z}(\pi)\geq \frac{1}{2}(|\partial_{G}(V_1)|+|\partial_{G}(V_2)|+6n_0-n_{Z}(\pi))\geq 3-|Z|+3n_0-\frac{1}{2}n_{Z}(\pi)).$$
Combining this with (\ref{equ::8}) and $n_{0}'=2$, we have
$$-n_{0}-\frac{1}{2}n_{Z}(\pi)-|Z|\geq 0,$$
and so $n_{0}=0$, $n_{Z}(\pi)=0$ and $|Z|=0$ by the fact $n_{0}\geq0$, $n_{Z}(\pi)\geq0$ and $|Z|\geq 0$. This implies that the partition $\pi$ consists of two nontrivial parts $V_1,V_2$ and $G-Z=G$. Then $V(G)=V_1\cup V_2$ and $e_{G}(\pi)=e_{G}(V_1,V_2)\leq 3$ by (\ref{equ::8}). Notice that $e_{G}(V_1,V_2)=\frac{1}{2}(|\partial_{G}(V_1)|+|\partial_{G}(V_2)|)\geq 3$ by (\ref{equ::12}). Thus, $e_{G}(V_1,V_2)= 3$. Let $E_{G}(V_1,V_2)=\{f_1,f_2,f\}$. Then we assert that $f_1,f_2,f$ are three independent edges. Otherwise, $G$ is not a 3-connected graph, a contradiction. This implies that $e_{G}(V_1,V_2)=3$ and $G$ is a spanning subgraph of $B_{n,|V_1|}^{3}$, and so
$$\rho(G)\leq\rho(B_{n,|V_1|}^{3}),$$
with equality holding if and only if $G\cong B_{n,|V_1|}^{3}$. Since $\delta\geq 6$ and $e_{G}(V_1,V_2)=3<\delta-1$, it follows that $\min\{|V_1|,|V_2|\}\geq\delta+1$ by Lemma \ref{lem::2.2}. Combining this with Lemma \ref{lem::2.4}, we have
$$\rho(G)\leq\rho(B_{n,\delta+1}^{3}),$$
with equality holding if and only if $G\cong B_{n,\delta+1}^{3}$,
which is impossible because $\rho(G)\geq\rho(B_{n,\delta+1}^{3})$ and $G\ncong B_{n,\delta+1}^{3}$. If $n_0'\geq 3$, by a similar analysis as Case 1 of Theorem \ref{thm::1.1}, we also deduce a contradiction.

{\flushleft\bf Case 2.} $|Z|=2$.

The proof is similar as Case 2 of Theorem \ref{thm::1.1}, so we omit it.\end{proof}

\section{Proof of Theorem \ref{thm::1.3}}

In this section, we give a short proof of Theorem \ref{thm::1.3}. The following lemma is used in the sequel.

\begin{lem}(See \cite{Leman})\label{lem::4.1}
A graph $G$ is a minimally rigid on $n$ vertices and $m$ edges if and only if $m=2n-3$ and
$$e_{G}(X)\leq 2|X|-3$$
for $X\subseteq V(G)$ with $|X|\geq 2$.
\end{lem}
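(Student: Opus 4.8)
The plan is to reduce the statement to a rank condition on the rigidity matrix and then treat the two directions separately. Recall that for a framework $(G,p)$ in $R^2$ the \emph{rigidity matrix} $R(G,p)$ is the $m\times 2n$ matrix whose row indexed by an edge $uv\in E(G)$ has $p(u)-p(v)$ in the two columns associated with $u$, $p(v)-p(u)$ in the two columns associated with $v$, and zeros elsewhere. By the Asimow--Roth theorem, for generic $p$ the graph $G$ is rigid in $R^2$ if and only if $\operatorname{rank}R(G,p)=2n-3$ (for $n\geq 2$), and the rows of $R(G,p)$ are generically independent precisely when no single edge of $G$ lies in the span of the remaining rows. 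Consequently $G$ is minimally rigid if and only if its $m$ rigidity rows are generically independent and span a space of dimension $2n-3$, that is, if and only if $m=2n-3$ and the rows are independent. Thus the theorem is equivalent to the assertion that the rows of $R(G,p)$ are generically independent if and only if $e_G(X)\leq 2|X|-3$ for every $X\subseteq V(G)$ with $|X|\geq 2$.

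For necessity I would argue as follows. Suppose $G$ is minimally rigid, so the rigidity rows are independent and $\operatorname{rank}R(G,p)=m=2n-3$. Fix $X\subseteq V(G)$ with $|X|\geq 2$. The rows indexed by edges of $G[X]$ have nonzero entries only in the $2|X|$ columns associated with $X$, so they constitute the rigidity matrix of the framework $(G[X],p|_X)$ on $|X|$ vertices, whose rank is at most $2|X|-3$. Independence of the full row set forces independence of this subset, whence $e_G(X)\leq 2|X|-3$; the edge count then gives $m=2n-3$.

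The substantive direction is sufficiency, and here I would use a Henneberg-type induction on $n$. First one shows that a graph with $m=2n-3$ satisfying the sparsity bound (a \emph{Laman graph}) has minimum degree at least $2$, since a vertex $v$ with $d_G(v)\leq 1$ would make $X=V(G)\setminus\{v\}$ violate the bound, and, because $\sum_{v}d_G(v)=4n-6<4n$, it contains a vertex $v$ of degree $2$ or $3$. If $d_G(v)=2$, then $G-v$ is again a Laman graph on $n-1$ vertices, minimally rigid by induction, and restoring $v$ with its two edges is a Henneberg type-1 operation; a direct computation shows that adjoining two new coordinate columns together with the two new generic rows preserves independence, so $G$ is minimally rigid. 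If $d_G(v)=3$ with neighbours $a,b,c$, then $G-v$ has $2n-6$ edges, and I would show that for at least one of the pairs $ab$, $ac$, $bc$ the graph obtained from $G-v$ by adding that pair as an edge is still a Laman graph; reinserting $v$ by subdividing that new edge and joining $v$ to the third neighbour is a Henneberg type-2 operation, which one checks also preserves generic independence.

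The main obstacle is the degree-$3$ step, which splits into two nontrivial points. Combinatorially, I must prove that an admissible edge split exists: if adding each of $ab,ac,bc$ to $G-v$ created a violation, then for each pair there is a \emph{tight} set with $e_{G-v}(X)=2|X|-3$ containing that pair, and I would combine these sets using the supermodularity identity $e_{G-v}(X\cup Y)+e_{G-v}(X\cap Y)= e_{G-v}(X)+e_{G-v}(Y)+e_{G-v}(X\setminus Y,\,Y\setminus X)$, which shows that tight sets meeting in at least two vertices have tight union and intersection with no crossing edges, to produce a set that, together with $v$ and its three edges, exceeds the Laman bound in $G$, contradicting sparsity. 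Geometrically, I must verify that the type-2 operation preserves independence: a generic realization of the split edge's endpoints must be chosen so that $v$ does not lie on the line through the two endpoints being joined, for otherwise the three new rows are dependent, and since generic independence is an open condition it suffices to exhibit one such placement. Handling these two points carefully is where the real work lies; the remainder is the bookkeeping of the induction.
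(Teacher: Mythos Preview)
The paper does not prove Lemma~\ref{lem::4.1} at all: it is stated with a citation to Laman's original paper and used as a black box in the proof of Theorem~\ref{thm::1.3}. So there is no ``paper's own proof'' to compare against; your proposal supplies what the paper simply quotes.

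That said, your outline is the standard route to Laman's theorem and is essentially correct. The reduction to generic independence of the rigidity rows via Asimow--Roth, the necessity argument by restricting to the columns of $X$, and the sufficiency by Henneberg induction on $n$ with the degree-$2$/degree-$3$ split are all the classical steps. You have also correctly identified the two genuine difficulties in the degree-$3$ case: the uncrossing of tight sets to guarantee an admissible split, and the verification that a type-$2$ extension preserves generic independence. One small point you pass over: before invoking tight-set supermodularity you should note that not all three of $ab,ac,bc$ can already be edges of $G$ (else $\{v,a,b,c\}$ would carry $6>2\cdot 4-3$ edges), so at least one candidate pair is a non-edge of $G-v$ and the ``add an edge'' operation makes sense. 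With that detail filled in, your sketch is a faithful summary of the proof one finds in the rigidity literature; the paper itself is content to cite it.
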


\renewcommand\proofname{\bf Proof of Theorem \ref{thm::1.3}}
\begin{proof}
Suppose that $G$ attains the maximum spectral radius among all minimally rigid graphs for $n\geq 3$, by Lemma \ref{lem::4.1}, we have $m=2n-3$ and $e_{G}(X)\leq 2|X|-3$
for all $X\subseteq V(G)$ with $|X|\geq 2$. Notice that $K_2\nabla (n-2)K_{1}$ is a minimally rigid graph. Then
\begin{equation}\label{equ::13}
\begin{aligned}
\rho(G)\geq \rho(K_2\nabla (n-2)K_{1})=\frac{1+\sqrt{8n-15}}{2}.
\end{aligned}
\end{equation}
We assert that $\delta(G)\geq 2$. Otherwise, there exists a vertex $u$ such that $d_{G}(u)=1$. Note that $m=2n-3$. Then $e_{G}(G-u)=2|V(G-u)|-2$, which is impossible because $e_{G}(G-u)\leq 2|V(G-u)|-3$, and so $\delta(G)\geq 2$.
By Lemmas \ref{lem::2.4}, \ref{lem::2.5}, and the fact $\delta\geq 2$, we obtain

\begin{equation}\label{equ::14}
\rho(G)\leq \frac{1}{2}+\sqrt{2m-2n+\frac{9}{4}}=\frac{1+\sqrt{8n-15}}{2}.
\end{equation}
Thus all the equalities hold in (\ref{equ::13}) and (\ref{equ::14}). This implies that $\delta=2$ and $G$ is a graph in which each vertex is of degree either $\delta$ or $n-1$ due to Lemmas \ref{lem::2.4}. If $n=3$, since $\delta(G)=2$, it follows that $G\cong K_2\nabla K_1$, as required. Let $t$ be the number of vertices $u\in G$ such that $d_{G}(u)=n-1$. If $n\geq 4$ and $G\ncong K_2\nabla (n-2)K_1$, then $t=1$ or $t\geq 3$. Note that $\delta=2$. Thus, $2m=3n-3$ if $t=1$ and $2m\geq 5n-9$ if $t\geq 3$, which is impossible because $m=2n-3$ and $n\geq4$. Therefore, $G\cong K_2\nabla (n-2)K_1$.

This completes the proof.\end{proof}

\end{document}